\theoremstyle{plain}
\newtheorem{thm}{Theorem}[section]
\newtheorem{prop}[thm]{Proposition}
\newtheorem{lem}[thm]{Lemma}
\newtheorem{cor}[thm]{Corollary}
\newtheorem{defn}[thm]{Definition}
\newtheorem{rem}[thm]{Remark}
\def\vol{\mathop{\mathrm{Vol}}\nolimits}
\newcommand{\bfC}{{\mathbf C}}
\newcommand{\bfP}{{\mathbf P}}
\newcommand{\bfR}{{\mathbf R}}
\newcommand{\mapright}[1]{\smash{\mathop{   \hbox to 0.7cm{\rightarrowfill}}
  \limits^{#1}}}
\def\om{\omega}
\newcommand{\Fut}{\mathrm{Fut}}
\renewcommand{\emph}[1]{{\color{red} \it #1}}
\definecolor{orange}{cmyk}{0, 0.7, 1, 0}
\definecolor{light-green}{cmyk}{0.5, 0, 0.5, 0}
\definecolor{light-blue}{cmyk}{0.5, 0, 0, 0}
\definecolor{light-yellow}{cmyk}{0,0,0.6,0}
\definecolor{dark-green}{cmyk}{0.7, 0, 0.7, 0.5}
\title{On the existence problem of Einstein-Maxwell K\"ahler metrics}
\author{Akito Futaki and Hajime Ono}
\address{Graduate School of Mathematical Sciences, The University of Tokyo, 3-8-1 Komaba Meguro-ku Tokyo 153-8914, Japan}
\email{afutaki@ms.u-tokyo.ac.jp}
\address{Department of Mathematics, Saitama University, 255 Shimo-Okubo, Sakura-Ku,
Saitama 380-8570, Japan}
\email{hono@rimath.saitama-u.ac.jp}
\date{March 17, 2018}
\begin{document}
\begin{abstract} In this expository paper we review on the existence problem of 
Einstein-Maxwell K\"ahler metrics, and make several remarks.
Firstly, we consider a slightly more general set-up than Einstein-Maxwell K\"ahler metrics, 
and give extensions of volume minimization principle, the notion of toric K-stability and
other related results to the general set-up.
Secondly, we consider the toric case when the manifold is the one point blow-up
of the complex project plane and the K\"ahler class $\Omega$ is chosen so that the area of the exceptional curve is sufficiently close
to the area of the rational curve of self-intersection number 1. We observe by numerical analysis that there should be a Killing vector field $K$ which gives a toric K-stable pair 
$(\Omega, K)$ in the sense of Apostolov-Maschler.
\end{abstract}

\maketitle

\section{Introduction}\label{section1}

Let $(M,J)$ be a compact K\"ahler manifold of complex dimension $m$. A Hermitian metric
$\Tilde{g}$ of constant scalar curvature on $(M,J)$ is said to be 
a conformally K\"ahler, Einstein-Maxwell (cKEM for short) metric
 if there exists a positive smooth function $f$ on $M$  such that 
$g=f^2\Tilde{g}$  is K\"ahler and that the Hamiltonian vector field $K=J\mathrm{grad}_gf$ of $f$ with respect to the K\"ahler form $\omega_g$ of $g$ is a Killing vector field for both $g$ and $\Tilde{g}$.
In this case we call the K\"ahler metric $g$ 
an Einstein-Maxwell K\"ahler (EMK for short) metric. 
Let $\omega_0$ be a K\"ahler form,
and consider $\Omega = [\omega_0] \in H^2_{\mathrm{DR}}(M,\mathbf R)$ as a fixed K\"ahler class. 
We look for an
Einstein-Maxwell K\"ahler metric $g$ such that the K\"ahler form $\omega_g$ belongs to $\Omega$.

Let $G$ be a maximal torus of the reduced automorphism group, and pick $K \in \mathfrak g := \mathrm{Lie}(G)$.
Then the problem is to
find a $G$-invariant K\"ahler metric $g$ with its K\"ahler form $\omega_g \in \Omega$ such that

(i) $\Tilde g = f^{-2} g$ is a cKEM metric,

(ii) $J\mathrm{grad}_{g} f = K$.

The scalar curvature $s_{\Tilde{g}}$ of $\Tilde{g} = f^{-2}g$ is given by
\begin{equation}\label{scal1}
s_{\Tilde{g}} = f^{2} s_g -2(2m-1)f\Delta_g f - 2m(2m-1)|df|_g^2
\end{equation}
where $s_g$ is the scalar curvature of $g$ and $\Delta_g$ is the Hodge Laplacian with respect to $g$. 

Now, starting with a K\"ahler metric $g$ 
and a Killing potential $f$, for any real number $n \in \mathbf R$ with $n \ne 0,\ 1,\ 2$ 
and $k \in \mathbf R$ with $k \ne 0$ we define the 
$(g,f,k,n)$-scalar curvature $s_{g,f,k,n}$ by
\begin{equation}\label{scal2}
s_{g,f,k,n} = f^{-k}\left\{ s_g +k(n-1)\frac{1}{f}\Delta_g f +\frac{k}{4} (n-1)(4+2k-kn)\frac{1}{f^2}|df|_g^2\right\}.
\end{equation}
The case $n=2m$ is the scalar curvature $s_{\Tilde{g}}$ of the conformal metric $\Tilde{g}=f^k g$,
and for other values of $n$ such a meaning is lost. However, the cases of general
values of $n$ appear in natural contexts such as in \cite{ACGL17} and \cite{LejmiUpmeier17}.
Moreover, Lahdili proves in \cite{Lahdili17} and \cite{Lahdili17_2} results for cKEM metrics generalizing
to constant $(g,f,-2,n)$-scalar curvature.

In this expository paper we give extensions of the volume minimization principle \cite{FO17}, \cite{ICCMNotices17}, the notion of toric K-stability \cite{AM} for $k = -2$ and
other related results for cKEM metrics to the general set-up of constant $(g,f,k,n)$-scalar curvature.
We consider the toric case where the manifold is the one point blow-up
of the complex project plane and the K\"ahler class $\Omega$ is chosen so that the area of the exceptional curve is sufficiently close
to the area of the rational curve of self-intersection number 1. We observe by numerical analysis that there should be a Killing 
vector field $K$ which gives a toric K-stable pair 
$(\Omega, K)$ in the sense of Apostolov-Maschler. For this purpose we show in 
Theorem \ref{gen-D,W-Z} that we have only to consider the 
simple test configurations to test toric K-stability, extending the earlier works of 
Donaldson \cite{donaldson02}, Wang and Zhou \cite{Wang-Zhou11}, \cite{Wang-Zhou14}.

The rest of this paper is organized as follows. In section 2 we extend the volume minimization for 
Einstein-Maxwell K\"ahler metrics, see Theorem \ref{vol minimization1}. In section 3 we review the normalized Einstein-Hilbert functional, and study its relation to the volume functional and the Futaki invariant. In section 4 we consider the 
normalized Einstein-Hilbert functional on toric K\"ahler manifolds. In section 5 we review toric K-stability, and 
prove Theorem \ref{gen-D,W-Z}. We then review the result of our paper \cite{FO17} on the one-point blow-up
of $\bfC\bfP^2$ and show the graphics of the results of the numerical analysis which indicate that this case 
should be K-stable and there should be a conformally K\"ahler, Einstein-Maxwell metric.

\section{Volume minimization for Einstein-Maxwell K\"ahler metrics}

In this section we review the results in \cite{FO17} and extend them to constant $(g,f,k,n)$-scalar curvature.
Let $M$ be a compact smooth manifold. We denote by 
$\mathrm{Riem}(M)$ the set of all Riemannian metrics on $M$, 
by $s_g$ the scalar curvature of $g$, and
by $dv_g$ the volume form of $g$.
For any given positive smooth function $f$ and real numbers $n \in \mathbf R$ with $n \ne 0,\ 1,\ 2$
and $k \in \mathbf R$ with $k \ne 0$, 
we define $s_{g,f,k,n}$ by the same formula as \eqref{scal2}.
We put
\begin{equation}\label{scal3}
S(g,f,k,n) := \int_M s_{g,f,k,n}\,f^{\frac{nk}2}dv_g
\end{equation}
and call it the total $(g,f,k,n)$-scalar curvature, and put
\begin{equation}\label{scal4}
\vol(g,f,k,n) := \int_M\,f^{\frac{nk}2}dv_g
\end{equation}
and call it the $(g,f,k,n)$-volume. 

Let $f_t$ be a smooth family of positive
functions such that $f_0=f,d/{dt}\vert_{t=0}f_t=\phi$. Then by straightforward computations we have
\begin{equation}\label{eq:2.2}
\left.\dfrac{d}{dt}\right\vert_{{t=0}}S(g,f_t,k,n)=\frac{k}2 (n-2)\int_M\,s_{g,f,k,n}\,\phi\,
f^{\frac{nk}2 - 1}\,dv_g
\end{equation}
and
\begin{equation}\label{eq:2.3}
\left.\dfrac{d}{dt}\right\vert_{{t=0}}\vol(g,f_t,k,n)=\frac{nk}2\int_M\, \phi\, f^{\frac{nk}2 - 1}\,dv_g.
\end{equation}

Now we consider a compact K\"ahler manifold $(M,J)$ of complex dimension $m$. 
As in section \ref{section1}, let $G$ be a maximal torus of the reduced automorphism group, and take $K \in \mathfrak g := \mathrm{Lie}(G)$.
Consider a fixed K\"ahler class
$\Omega$ on $(M,J)$, and denote by
$\mathcal K_\Omega^G$ the space of $G$-invariant K\"ahler metrics $\omega$
in $\Omega$. For any $(K,a,g)\in \mathfrak g\times \mathbf{R}\times
\mathcal K_\Omega^G$, there exists a unique function $f_{K,a,g}
\in C^\infty(M,\mathbf{R})$
satisfying the following two conditions:
\begin{equation}\label{eq:2.4}
\iota_K\omega=-df_{K,a,g},\ \ 
\int_Mf_{K,a,g}\frac{\omega^m}{m!}=a.
\end{equation}
By \eqref{eq:2.4}, 
it is easy to see that $f_{K,a,g}$ has the following properties:
\begin{equation}\label{eq:2.5}
f_{K+H,a+b,g}=f_{K,a,g}+f_{H,b,g}
\end{equation}
\begin{equation}\label{eq:2.6}
f_{0,a,g}=\dfrac{a}{\vol(M,\omega)}
\end{equation}
\begin{equation}\label{eq:2.7}
f_{CK,Ca,g}=Cf_{K,a,g}
\end{equation}

Hereafter the K\"ahler metric $g$ and its K\"ahler form $\omega_g$ are often identified, and $\omega_g$ is
often denoted by $\omega$.
Noting that $\min\{f_{K,a,g}\,|\,x \in M\}$ is independent of $g\in \mathcal K_\Omega^G$ (this follows from the convexity of
moment map images and the fact that the vertices do not move even if we change the K\"ahler metric in the fixed K\"ahler class $\Omega$), 
we put
\begin{eqnarray}
\mathcal P_\Omega^G&:=&\{(K,a)\in \mathfrak g\times \mathbf{R}\,|\, f_{K,a,g} > 0\}. \label{eq:2.8}
\end{eqnarray}
Note that the right hand side of \eqref{eq:2.8} is independent of $g\in \mathcal K_\Omega^G$ again since the moment polytope
is independent of $g\in \mathcal K_\Omega^G$. 
Fixing $(K,a) \in P^G_{\Omega}$, $n \in \mathbf R$ and $k \in \mathbf R$, put
\begin{equation}\label{eq:2.9}
c_{\Omega,K,a,k,n}:=\dfrac
{\displaystyle{\int_M\,s_{g,f_{K,a,g},k,n}\,f_{K,a,g}^{\frac{kn}2-1}\,
\frac{\omega^m}{m!}}}
{\displaystyle{\int_M\,f_{K,a,g}^{\frac{kn}2-1}\,
\frac{\omega^m}{m!}}}
\end{equation}
and
\begin{equation}\label{eq:2.10}
d_{\Omega,K,a,k,n}:=
\dfrac{S(g, f_{K,a,g},k,n)}{\vol(g, f_{K,a,g},k,n)}
=\dfrac
{\displaystyle{\int_M\,s_{g,f_{K,a,g},k,n}\,f_{K,a,g}^{\frac{kn}2}\,
\frac{\omega^m}{m!}}}
{\displaystyle{\int_M\,f_{K,a,g}^{\frac{kn}2}\,
\frac{\omega^m}{m!}}}
\end{equation}
Then $c_{\Omega,K,a,k,n}$ and $d_{\Omega,K,a,k,n}$ are constants independent of the choice of $g\in \mathcal K_\Omega^G$
since the integrands of (\ref{eq:2.9}) and (\ref{eq:2.10}) are part of equivariant cohomology, see e.g. \cite{futakimorita85}, \cite{futaki88}, 
\cite{futakimabuchi02}.
Since $P^G_{\Omega}$ is a cone in $\mathfrak g\times \mathbf{R}$ by \eqref{eq:2.7}, with $n$ and $k$ fixed we consider its slice
\begin{equation}\label{eq:2.14}
\Tilde{\mathcal P}_{\Omega}^G:=\left\{(K,a)\in \mathcal P_\Omega^G\,\Big|\,
d_{\Omega,K,a,k,n}=\gamma \right\}
\end{equation}
where $\gamma$ is chosen to be $-1$, $0$ or $1$ depending on the sign of $d_{\Omega,K,a,k,n}$.
Let $(K(t),a(t)),\ t\in (-\varepsilon,
\varepsilon)$ be a smooth curve in $\Tilde{\mathcal P}_{\Omega}^G$
such that $(K(0),a(0))=(K,a),(K'(0),a'(0))=(H,b)$.
Then 
$$
S(g, f_{K(t),a(t),g},k,n)= \gamma \vol(g, f_{K(t),a(t),g},k,n)
$$
holds for any $t\in (-\varepsilon,
\varepsilon)$. By differentiating this equation at $t=0$ and noting $k \ne 0$, we have
\begin{equation}\label{eq:2.15}
(n-2)\int_M
\,s_{g, f_{K,a,g},k,n}f_{H,b,g}\,f_{K,a,g}^{\frac{nk}2 - 1}
\dfrac{\omega^m}{m!}=n\gamma
\int_M
\,f_{H,b,g}\,f_{K,a,g}^{\frac{nk}2 - 1}
\dfrac{\omega^m}{m!}.
\end{equation}
The linear function $\Fut^G_{\Omega,K,a,k,n}:\mathfrak g\to \bfR$ defined by
\begin{equation}\label{eq:2.16}
\Fut^G_{\Omega,K,a,k,n}(H):=
\int_M
(s_{g,K,a,k,n}-c_{\Omega,K,a,k,n})\,f_{H,b,g}\,f_{K,a,g}^{\frac{nk}2 - 1}\,
\dfrac{\om_g^m}{m!}
\end{equation}
is independent of the choice of K\"ahler metric $g\in \mathcal K^G_\Omega$
and $b \in \bfR$ (\cite{AM}). 
If there exists a K\"ahler metric $g \in \mathcal K^G_\Omega$
such that $\Tilde{g} = f^{k}_{K,a,g} g$ is a constant $(g,f,k,n)$-scalar curvature metric, then
$\Fut^G_{\Omega,K,a,k,n}$ vanishes identically.

For the path $(K(t),a(t)),\ t\in (-\varepsilon,
\varepsilon)$ in $\Tilde{\mathcal P}_\Omega^G$
with $(K(0),a(0))=(K,a)$, $(K'(0),a'(0))=(H,b)$ we have from \eqref{eq:2.15}
\begin{equation}\label{eq:2.17}
\begin{split}
\Fut_{\Omega,K,a,k,n}^G(H)&=\left(
\dfrac{n\gamma}{n-2}-c_{\Omega,K,a,k,n}\right)\int_M\,f_{H,b,g}\,
f_{K,a,g}^{\frac{nk}2 -1}\,\dfrac{\omega^m}{m!}\\
&=\left(\dfrac{n\gamma}{n-2}-c_{\Omega,K,a,k,n}\right)\frac{2}{nk}
\left.\dfrac{d}{dt}\right|_{t=0}\vol(g, f_{K(t),a(t),g},k,n).
\end{split}
\end{equation}
If there exists a constant $(g,f,k,n)$-scalar curvature metric
$\Tilde{g} = f^{k}_{K,a,g} g$ with $g\in \mathcal K^G_\Omega$,
then 
$$c_{\Omega,K,a,k,n}=d_{\Omega,K,a,k,n}=\gamma$$ 
and 
$$\Fut_{\Omega,K,a,k,n}^G(H)=0.$$
Therefore for $\gamma = \pm1$ we have
\begin{equation}\label{eq:2.18}
\left.\dfrac{d}{dt}\right|_{t=0}\vol(g, f_{K(t),a(t),g},k,n)=0.
\end{equation}
The case of $\gamma = 0$ can be treated separately, see \cite{FO17}.

We summarize the result as follows.

\begin{thm}\label{vol minimization1}
Let $\Omega$ be a fixed K\"ahler class, and $n \ne 0,\ 1,\ 2$ and $k \ne 0$ be fixed real numbers.
Suppose that the pair $(K,a)$ of Killing vector field $K$ and normalization constant $a$ belongs to
$\Tilde{\mathcal P}_{\Omega}^G$. If there exists a $G$-invariant K\"ahler metric $g$ in 
the K\"ahler class $\Omega$, i.e. $g \in \mathcal K^G_\Omega$,  such that the 
$(g,f,k,n)$-scalar curvature is constant for the Killing Hamiltonian function $f =  f_{K,a,g}$
then $(K,a)$ 
is a critical point of 
$\vol_{n,k}:
\Tilde{\mathcal P}_{\Omega}^G\to \mathbf{R}$
given by 
\begin{eqnarray*}
\vol_{n,k}(K,a) &:=&\vol(g, f_{K,a,g},k,n)\\
&=& 
\int_M\,f_{K,a,g}^{\frac{nk}2}dv_g
\end{eqnarray*}
for $(K,a)\in 
\Tilde{\mathcal P}_{\Omega}^G.$
Further, $(K,a)$ 
is a critical point of 
$\vol_{n,k}:
\Tilde{\mathcal P}_{\Omega}^G\to \mathbf{R}$
if and only if 
$\Fut_{\Omega,K,a,k,n}^G\equiv 0$.
\end{thm}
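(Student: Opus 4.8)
The plan is to obtain both assertions from the first-variation identity \eqref{eq:2.17}, which already exhibits $\Fut^G_{\Omega,K,a,k,n}(H)$ as a scalar multiple of the directional derivative of $\vol_{n,k}$ along a curve in the slice $\widetilde{\mathcal P}^G_\Omega$. The only real content beyond \eqref{eq:2.17} is a short piece of linear algebra that upgrades the proportionality of two linear functionals into the stated equivalence, together with book-keeping of the degenerate cases.

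First I would fix $g\in\mathcal K^G_\Omega$, write $f:=f_{K,a,g}$, and introduce the two linear functionals $L(H,b):=\int_M f_{H,b,g}\,f^{\frac{nk}{2}-1}\frac{\om^m}{m!}$ and $M_1(H,b):=\int_M s_{g,f,k,n}\,f_{H,b,g}\,f^{\frac{nk}{2}-1}\frac{\om^m}{m!}$ on $\mathfrak g\times\bfR$. By the linearity \eqref{eq:2.5}, a curve $(K(t),a(t))$ in $\widetilde{\mathcal P}^G_\Omega$ with velocity $(H,b)$ at $t=0$ has $\frac{d}{dt}\big|_{t=0} f_{K(t),a(t),g}=f_{H,b,g}$, so \eqref{eq:2.3} identifies the directional derivative of $\vol_{n,k}$ with $\frac{nk}{2}L(H,b)$; differentiating the slice relation $S=\gamma\,\vol$ and invoking \eqref{eq:2.2}, \eqref{eq:2.3} gives the tangency condition \eqref{eq:2.15}, so that the tangent space to the slice is $T=\ker\big[(n-2)M_1-n\gamma L\big]$. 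In this notation \eqref{eq:2.16} reads $\Fut^G_{\Omega,K,a,k,n}(H)=M_1(H,b)-c_{\Omega,K,a,k,n}L(H,b)$ (independent of $b$), and criticality of $(K,a)$ on the slice is exactly the condition $L|_T=0$. I also record that homogeneity \eqref{eq:2.7} makes $\vol_{n,k}$ of degree $\frac{nk}{2}$ on the cone $\mathcal P^G_\Omega$, so Euler's relation yields $L(K,a)=\vol_{n,k}(K,a)>0$; in particular $L\not\equiv0$, a positivity I will use to exclude degeneracies.

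For the first assertion, if $\widetilde g=f^k g$ has constant $(g,f,k,n)$-scalar curvature then $s_{g,f,k,n}$ is constant, whence $c_{\Omega,K,a,k,n}=d_{\Omega,K,a,k,n}=\gamma$ and $\Fut^G_{\Omega,K,a,k,n}\equiv0$. For $\gamma=\pm1$ the coefficient $\frac{n\gamma}{n-2}-c_{\Omega,K,a,k,n}=\frac{2\gamma}{n-2}$ in \eqref{eq:2.17} is nonzero, so the identity forces the directional derivative \eqref{eq:2.18} to vanish along every curve, i.e.\ $(K,a)$ is a critical point; the case $\gamma=0$ is handled separately as in \cite{FO17}.

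The equivalence in the second assertion is the main obstacle, because $\Fut^G_{\Omega,K,a,k,n}$ lives on all of $\mathfrak g$ while criticality concerns derivatives tangent to the codimension-one slice, so \eqref{eq:2.17} does not by itself match the two. I would settle this at the level of $L$ and $M_1$. If $(K,a)$ is critical, then $L$ vanishes on $T=\ker[(n-2)M_1-n\gamma L]$; since $L\not\equiv0$ the functional $(n-2)M_1-n\gamma L$ cannot vanish identically, so $L$ is a nonzero multiple of it and hence $M_1=\beta L$ for some scalar $\beta$. Evaluating at $(K,a)$ and comparing with \eqref{eq:2.10} gives $\beta=\gamma$, and comparing with \eqref{eq:2.9} (the constant Hamiltonian $1=f_{0,\vol(M,\om),g}$) gives $c_{\Omega,K,a,k,n}=\gamma$, so $\Fut^G_{\Omega,K,a,k,n}=(\gamma-c_{\Omega,K,a,k,n})L\equiv0$. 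Conversely, $\Fut^G_{\Omega,K,a,k,n}\equiv0$ means $M_1=c_{\Omega,K,a,k,n}L$ as functionals; evaluating at $(K,a)$ forces $c_{\Omega,K,a,k,n}=d_{\Omega,K,a,k,n}=\gamma$, so $(n-2)M_1-n\gamma L=-2\gamma L$ and $T=\ker L$, on which $L$ trivially vanishes, hence $(K,a)$ is critical. The whole statement thus reduces to the elementary fact that a linear functional vanishing on the kernel of another is proportional to it, the positivity $L(K,a)>0$ being precisely what rules out the degenerate alternative.
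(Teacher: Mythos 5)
Your proposal is correct and follows essentially the same route as the paper: everything rests on the identity \eqref{eq:2.17} obtained by differentiating $S=\gamma\vol$ along curves in the slice, and your functionals $L$, $M_1$ merely make explicit the linear-algebra bookkeeping that the paper leaves implicit in asserting the equivalence with $\Fut^G_{\Omega,K,a,k,n}\equiv 0$ (in particular your evaluations at $(K,a)$ and at the constant Hamiltonian $f_{0,\vol(M,\omega_0),g}=1$, which pin down $c_{\Omega,K,a,k,n}=d_{\Omega,K,a,k,n}=\gamma$, are the right way to fill that gap, and the nondegeneracy you extract from $L(K,a)=\vol_{n,k}(K,a)>0$ is automatic since $(n-2)M_1(K,a)-n\gamma L(K,a)=-2\gamma\vol_{n,k}(K,a)\neq 0$ for $\gamma=\pm 1$). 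The one caveat is that your closing step, $(n-2)M_1-n\gamma L=-2\gamma L$ hence $T=\ker L$, silently uses $\gamma\neq 0$: for $\gamma=0$ the slice is itself a cone, the radial direction $(K,a)$ is tangent with $\tfrac{d}{dt}\big|_{t=0}\vol=\tfrac{nk}{2}\vol_{n,k}(K,a)\neq 0$, so no critical point exists on that slice and the equivalence must be reformulated with the separate normalization of \cite{FO17} --- exactly the case the paper also sets aside, so you should state explicitly that your equivalence argument, like the first assertion, is for $\gamma=\pm 1$.
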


\begin{cor}\label{vol minimization2}
Let $\Omega$ be a fixed K\"ahler class.
Take $n=2m$ and $k=-2$, and 
let $(K,a)\in \Tilde{\mathcal P}_{\Omega}^G$. If there exists a conformally K\"ahler,
Einstein-Maxwell metric $\Tilde{g} = f^{-2}_{K,a,g} g$ with $g\in \mathcal K^G_\Omega$, 
then $(K,a)$ 
is a critical point of 
$\vol:
\Tilde{\mathcal P}_{\Omega}^G\to \mathbf{R}$
given by $\vol(K,a):=\vol(g, f_{K,a,g},-2,2m)$ for $(K,a)\in 
\Tilde{\mathcal P}_{\Omega}^G$.
Further, $(K,a)$ 
is a critical point of 
$\vol:
\Tilde{\mathcal P}_{\Omega}^G\to \mathbf{R}$
if and only if 
$\Fut_{\Omega,K,a,-2,2m}^G\equiv 0$.
\end{cor}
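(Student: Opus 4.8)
The plan is to obtain this statement as the special case $n = 2m$, $k = -2$ of Theorem \ref{vol minimization1}, so that essentially no new argument is required beyond identifying the two scalar-curvature conditions. The only substantive verification is that, for these parameter values, the abstract hypothesis of constant $(g,f,k,n)$-scalar curvature coincides with the defining property of a conformally K\"ahler, Einstein-Maxwell metric.

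First I would substitute $n = 2m$ and $k = -2$ into the defining formula \eqref{scal2}. The overall factor becomes $f^{-k} = f^{2}$; the Laplacian coefficient is $k(n-1) = -2(2m-1)$; and the gradient coefficient is
\[
\frac{k}{4}(n-1)(4 + 2k - kn) = -\tfrac12\,(2m-1)\,(4 - 4 + 4m) = -2m(2m-1).
\]
Substituting these back gives
\[
s_{g,f,-2,2m} = f^{2} s_g - 2(2m-1)\,f\,\Delta_g f - 2m(2m-1)\,|df|_g^2,
\]
which is exactly the scalar curvature $s_{\Tilde g}$ of the conformal metric $\Tilde g = f^{-2} g$ as recorded in \eqref{scal1}. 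Hence, for $f = f_{K,a,g}$, a K\"ahler metric $g$ has constant $(g,f,-2,2m)$-scalar curvature if and only if $\Tilde g = f^{-2}_{K,a,g}\,g$ has constant scalar curvature, i.e. if and only if $\Tilde g$ is a cKEM metric in the sense of the Introduction.

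With this dictionary established, the hypotheses of Theorem \ref{vol minimization1} are met verbatim: the existence of a cKEM metric $\Tilde g = f^{-2}_{K,a,g}\,g$ with $g \in \mathcal K^G_\Omega$ is the same as the existence of a $G$-invariant K\"ahler metric in $\Omega$ realizing constant $(g,f,-2,2m)$-scalar curvature. Theorem \ref{vol minimization1} then yields at once that $(K,a)$ is a critical point of $\vol_{2m,-2}$ on $\Tilde{\mathcal P}^G_\Omega$, which is precisely the functional $\vol(K,a) = \vol(g, f_{K,a,g}, -2, 2m) = \int_M f_{K,a,g}^{-2m}\,dv_g$, and that criticality is equivalent to $\Fut^G_{\Omega,K,a,-2,2m} \equiv 0$.

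I do not expect a genuine obstacle, since all the analytic content is already carried by Theorem \ref{vol minimization1}; the proof is a bookkeeping specialization. The one point that must be checked before invoking the theorem is that the parameters lie in the admissible range $n \neq 0,1,2$ and $k \neq 0$: here $k = -2 \neq 0$ always, while $n = 2m \neq 2$ forces $m \geq 2$, so the direct specialization applies in complex dimension at least two (in particular for the one-point blow-up of $\bfC\bfP^2$ of interest later, where $m = 2$ and $n = 4$).
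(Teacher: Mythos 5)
Your proposal is correct and takes essentially the same route as the paper, which states the corollary without separate proof precisely because it is the specialization $n=2m$, $k=-2$ of Theorem \ref{vol minimization1}, combined with the identification (already noted after \eqref{scal2}) that $s_{g,f,-2,2m}$ coincides with the conformal scalar curvature $s_{\Tilde g}$ of \eqref{scal1}, so that constancy of the $(g,f,-2,2m)$-scalar curvature for $f=f_{K,a,g}$ is exactly the cKEM condition (the Killing requirement being automatic since $K\in\mathfrak g$ and $g\in\mathcal K^G_\Omega$). Your coefficient verification $\frac{k}{4}(n-1)(4+2k-kn)=-2m(2m-1)$ and the observation that $n=2m\neq 2$ restricts to $m\geq 2$ are both accurate and consistent with the paper's standing hypotheses.
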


For a given K\"ahler class $\Omega$ the critical points of $\vol:
\Tilde{\mathcal P}_{\Omega}^G\to \mathbf{R}$ are not unique in general
as can be seen from LeBrun's construction \cite{L2}.

\section{The normalized Einstein-Hilbert functional}
In the previous section we confined ourselves to the view point from the volume functional.
In the present section we see that, when restricted to $\Tilde{\mathcal P}_{\Omega}^G$, 
considering the volume functional is essentially the same as considering the normalized Einstein-Hilbert functional.
The normalized Einstein-Hilbert functional $EH : \mathrm{Riem}\,(M)\to \mathbf{R}$ on an
$n$-dimensional compact Riemannian
manifold is the functional on $\mathrm{Riem}(M)$
defined by 
$$EH(g):=\dfrac{S(g)}{(\vol(g))^{\frac{n-2}{n}}}$$
where $S(g)$ and $\vol(g)$ are respectively the total scalar curvature and the volume of $g$.
It is a standard fact that the critical points of $EH$ are Einstein metrics, and that, when restricted 
to a conformal class, the critical points are metrics of constant scalar curvature.

Let us see this in a slightly different setting.
In the equation \eqref{scal2}, let us replace $s_g$ by a smooth function $\varphi$, and put
\begin{equation}\label{scal12}
s_{g,f,k,n,\varphi} = f^{-k}\left\{ \varphi +k(n-1)\frac{1}{f}\Delta_g f +\frac{k}{4} (n-1)(4+2k-kn)\frac{1}{f^2}|df|_g^2\right\}.
\end{equation}
Accordingly, we may replace \eqref{scal3} by 
\begin{equation}\label{scal13}
S(g,f,k,n,\varphi) := \int_M s_{g,f,k,n,\varphi}\,f^{\frac{nk}2}dv_g,
\end{equation}
and replace the normalized Einstein-Hilbert functional by
$$EH(g,f,k,n,\varphi):=\dfrac{S(g,f,k,n,\varphi)}{(\vol(g,f,k,n,\varphi))^{\frac{n-2}{n}}}.$$
As before, let $f_t$ be a smooth family of positive
functions such that $f_0=f,d/{dt}\vert_{t=0}f_t=\phi$. 
Then one can show
\begin{eqnarray}\label{scal15}
&&\left.\frac{d}{dt} \right|_{t=0} EH(g,f_t,k,n,\varphi) \\
&& \qquad =
\frac{(n-2)k}2 \vol(g,f,k,n,\varphi)^\frac{2-n}{n} \nonumber\\
&&\qquad\qquad \cdot \left\{\int_M \left(s_{g,f,k,n,\varphi} - \dfrac{S(g,f,k,n,\varphi)}{\vol(g,f,k,n)}\right)\phi\,f^{\frac{nk}2 -1} dv_g\right\}.\nonumber
\end{eqnarray}
Thus we have shown
\begin{prop}\label{EH1} The function $s_{g,f,k,n,\varphi}$ satisfies
$$ s_{g,f,k,n,\varphi} = \mathrm{constant} $$
if and only if $f$ is a critical point of the functional 
$f \mapsto EH(g,f,k,n,\varphi)$.
\end{prop}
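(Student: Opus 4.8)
The plan is to read off the result directly from the first-variation formula \eqref{scal15}, so the proof is essentially an application of the fundamental lemma of the calculus of variations. The key observation is that the prefactor $\frac{(n-2)k}{2}\,\vol(g,f,k,n,\varphi)^{\frac{2-n}{n}}$ in \eqref{scal15} is nonzero under our standing hypotheses $n\neq 2$ and $k\neq 0$ (the volume is positive since $f$ is positive), so the vanishing of $\frac{d}{dt}\big|_{t=0}EH(g,f_t,k,n,\varphi)$ for \emph{all} admissible variations $\phi$ is equivalent to the vanishing of the bracketed integral against all $\phi$.

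Carrying this out, I would first fix a positive $f$ and let $\phi$ range over all smooth functions on $M$, realized as $\phi = \frac{d}{dt}\big|_{t=0}f_t$ for a smooth family $f_t$ of positive functions with $f_0=f$; since $f>0$, every smooth $\phi$ arises this way (for instance via $f_t=f+t\phi$ for small $t$). Then $f$ is a critical point of $f\mapsto EH(g,f,k,n,\varphi)$ precisely when the integral
\[
\int_M \left(s_{g,f,k,n,\varphi}-\frac{S(g,f,k,n,\varphi)}{\vol(g,f,k,n)}\right)\phi\,f^{\frac{nk}2-1}\,dv_g
\]
vanishes for every such $\phi$. Since $f^{\frac{nk}2-1}\,dv_g$ is a fixed smooth positive measure, the fundamental lemma forces the factor $s_{g,f,k,n,\varphi}-\frac{S(g,f,k,n,\varphi)}{\vol(g,f,k,n)}$ to vanish identically; that is, $s_{g,f,k,n,\varphi}$ equals the constant $S(g,f,k,n,\varphi)/\vol(g,f,k,n)$ pointwise.

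For the converse, if $s_{g,f,k,n,\varphi}$ is constant, then integrating the defining formula \eqref{scal13} shows that this constant must equal $S(g,f,k,n,\varphi)/\vol(g,f,k,n)$, whence the bracketed integral in \eqref{scal15} vanishes for every $\phi$ and $f$ is critical. This gives the equivalence in both directions.

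The only genuine point requiring care, and hence the main obstacle, is establishing the first-variation formula \eqref{scal15} itself; but this is asserted in the excerpt (\emph{``one can show''}) and follows from the elementary variations \eqref{eq:2.2} and \eqref{eq:2.3} together with the quotient rule applied to $EH$. Granting \eqref{scal15}, the argument above is purely formal. One should simply note that the normalization constant appearing in the bracket is $S(g,f,k,n,\varphi)/\vol(g,f,k,n)$ rather than a ratio involving $\vol(g,f,k,n,\varphi)$, matching exactly the constant value that a constant $s_{g,f,k,n,\varphi}$ must take.
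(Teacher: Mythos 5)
Your proposal is correct and follows essentially the same route as the paper, which derives the first-variation formula \eqref{scal15} and reads the proposition off from it exactly as you do (positive prefactor since $n\neq 2$, $k\neq 0$, positive measure $f^{\frac{nk}{2}-1}\,dv_g$, fundamental lemma of the calculus of variations, and the constant identified as $S/\vol$ by integration for the converse). Your closing remark on the normalization constant is also consistent with the paper, where $\vol$ does not depend on $\varphi$, so nothing further is needed.
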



Let us return to the situation of the previous section where we considered a 
compact K\"ahler manifold with a maximal torus $G$ of the reduced automorphisms group,
with a fixed K\"ahler class $\Omega$. 
Taking $\varphi$ to be the $(g,f,k,n)$-scalar curvature, we consider the Einstein-Hilbert
functional $EH(g,f,k,n):= EH(g,f,k,n,s_{g,f,k,n})$.
By the same reasoning from equivariant cohomology again, for a fixed $(K,a)$,
$EH(g,f_{K,a,g},k,n)$ is independent of the choice of $g\in \mathcal K^G_\Omega$.
Set $EH_{k,n}(K,a):=EH(g,f_{K,a,g},k,n)$. Then using \eqref{eq:2.5}, \eqref{eq:2.6} and \eqref{eq:2.7}, we see
\begin{eqnarray}\label{eq:3.3}
&&\left.\dfrac{d}{dt}\right\vert_{{t=0}}EH_{k,n}(K+tH,a)\\
&&=
\dfrac{(n-2)k}{2\vol_n(K,a)^{\frac{n-2}{n}}}\int_M\left(s_{g,f_{K,a,g},k,n}-d_{\Omega,K,a,k,n}\right)
f_{K,a,g}
^{\frac{nk}2-1}f_{H,0,g}\frac{\om_g^m}{m!}\nonumber
\end{eqnarray}
and
\begin{eqnarray}\label{eq:3.4}
&&\left.\dfrac{d}{dt}\right\vert_{{t=0}}EH_{k,n}(K,a+tb)\\
&&=\dfrac{(n-2)kb}{2\vol_n(K,a)^{\frac{n-2}{n} + 1}}
(c_{\Omega,K,a,n}-d_{\Omega,K,a,n})\int_M
f_{K,a,g}^{\frac{nk}2-1}
\frac{\om_g^m}{m!}.\nonumber
\end{eqnarray}
If there exist $g \in \mathcal K^G_\Omega$, $K$ and $a$ such that $s_{g, f_{K,a,g},k,n}$ is constant, then
\begin{equation}\label{critical} 
s_{g, f_{K,a,g},k,n} =  c_{\Omega,K,a,k,n} = d_{\Omega,K,a,k,n}, 
\end{equation}
and thus 
the pair $(K,a)$
is a critical point of the function $EH_{k,n} : \mathcal P_\Omega^G \to \mathbf R$ given by
\begin{equation}\label{eq:3.5}
(K,a)\mapsto EH_{k,n}(K,a):=EH(g,f_{K,a,g},k,n).
\end{equation}
Conversely, suppose that $(K,a)$ is a critical point of $EH_{k,n} : \mathcal P_\Omega^G \to \mathbf R$.
Then one can see $(K,a)$ satisfies
$c_{\Omega,K,a,k,n} = d_{\Omega,K,a,k,n}$.
Hence, by \eqref{eq:2.16} and \eqref{eq:3.3}, 
$\mathrm{Fut}^G_{\Omega,K,a,k,n}$ vanishes.
More direct relation between the volume functional and the Einstein-Hilbert functional
can be seen as follows.
\begin{rem}\label{restriction}
Since $EH_{k,n}$ is homogeneous of degree $0$ on $\mathcal P_\Omega^G$ we may restrict $EH_{k,n}$ to the slice
\begin{equation}\label{eq:3.8}
\tilde{\mathcal P}^G_{\Omega,n}:=
\{(K,a)\in \mathcal P^G_{\Omega,n}\,|\,
d_{\Omega,k,a,k,n}=\gamma\}
\end{equation}
Then 
\begin{equation}\label{eq:3.9}
EH_{k,n}(K,a)=\gamma \vol_{k,n} (K,a)^{\frac{2}{n}}
\end{equation}
on 
$\tilde{\mathcal P}^G_{\Omega,n}$.
This shows that the volume minimization Theorem \ref{vol minimization1} is equivalent to
finding a critical point of the Einstein-Hilbert functional.
\end{rem}


\section{The
normalized Einstein-Hilbert functional for toric K\"ahler manifolds.}

In this section, we give the explicit formula for
the Futaki invariant and
the normalized Einstein-Hilbert functional
when $(M,J,\omega)$ is a compact toric K\"ahler manifold and $k=-2$.

Let $(M,\omega)$ be a $2m$-dimensional compact toric manifold
and $\mu:M\to \mathbf{R}^m$ the moment map.
It is well-known that the image of $\mu$,
$\Delta:=\mathrm{Image}\,\mu$, is an $m$-dimensional Delzant polytope
in $\mathbf{R}^m$. A $T^m$-invariant, $\omega$-compatible complex
structure $J$ on $M$ gives
a convex function $u$, called a symplectic potential, on $\Delta$ as follows.
For the action-angle coordinates $(\mu_1,\dots,\mu_m,
\theta_1,\dots,\theta_m)
\in \Delta\times T^m$, there exists a smooth
convex function $u$ on $\Delta$ which satisfies
$$
J\frac{\partial }{\partial \mu_i}=
\sum _{j=1}^mu_{,ij}\frac{\partial }{\partial \theta_j},\ \ 
J\frac{\partial }{\partial \theta_i}=\sum_{j=1}^m
\mathbf H^u_{ij}\frac{\partial }{\partial \mu_j},
$$
where, for a smooth function $\varphi $ of $\mu=(\mu_1,\dots,\mu_m)$,
we denote by $\varphi_{,i}$ the partial derivative $\partial \varphi/\partial \mu_i$
and by $\mathbf H^u=(\mathbf H^u_{ij})$ the inverse matrix of the Hessian $(u_{,ij})$ of $u$.
Conversely, if we give a smooth convex function $u$ on $\Delta$
satisfying some boundary condition,
by the formula above, we can recover a $T^m$-invariant $\omega$-compatible
complex structure on $M$, see \cite{Abreu} for more detail.

Let $u$ be a symplectic potential on $\Delta$.
Then the toric K\"ahler metric $g_J=\omega(\cdot,J\cdot)$ is
represented as
\begin{equation}\label{toric metric}
g_J=\sum_{i,j=1}^mu_{,ij}d\mu_i d\mu_j+\sum_{i,j=1}^m
\mathbf H^u_{ij}d\theta_i d\theta_j.
\end{equation}
According to Abreu \cite{Abreu}, the scalar curvature $s_J$ of  $g_J$
is
\begin{equation}\label{scalar curv}
s_J=
-\sum_{i,j=1}^m\mathbf H^u_{ij,ij}.
\end{equation}

In this case, a Killing potential is an affine linear function positive on $\Delta$.
Fix a Killing potential $f$. Then $(g_J,f,k,n)$-scalar curvature
$s_{J,f,k,n}$ is given by

\begin{equation}\label{t-scal1}
s_{J,f,k,n}=f^{-k}s_J+\frac{4(n-1)}{n-2}f^{-\frac{k(n+2)}{4}}\Delta_Jf^{\frac{k(n-2)}{4}},
\end{equation}
where $\Delta_J=\Delta_{g_J}$.
For a smooth function $\varphi$ of $\mu_1,\dots,\mu_m$, 
$$
\Delta_J\varphi=-\sum_{i,j=1}^m \{\varphi_{,ij}\mathbf H^u_{ij}+\varphi_{,i}\mathbf H^u
_{ij,j}\}
$$
holds (see the equation $(20)$ in \cite{AM}).
Since $f$ is affine linear, we have
\begin{equation}\label{laplacian}
\begin{split}
&\Delta_Jf^{\frac{k(n-2)}{4}}\\
&=
-\frac{k(n-2)}{4}f^{\frac{k(n-2)}{4}}\sum_{i,j=1}^m
\left\{
\left(
\frac{k(n-2)}{4}-1
\right)\frac{f_{,i}f_{,j}}{f^2}\mathbf H^u_{ij}+\frac{f_{,i}}{f}\mathbf H^u_{ij,j}
\right\}.
\end{split}
\end{equation}

By \eqref{scalar curv}, \eqref{t-scal1} and \eqref{laplacian},
the $(g_J,f,k,n)$-scalar curvature is

\begin{equation}\label{t-scal2}
\begin{split}
&s_{J,f,k,n}\\
&=
-f^{-k}\sum_{i,j=1}^m \left\{\mathbf H^u_{ij,ij}+
\frac{k(n-1)}{f}
f_{,i}\mathbf H^u_{ij,j}+
\frac{k(n-1)}{f^2}\left(\frac{k(n-2)}{4}-1\right) f_{,i} f_{,j}
\mathbf H^u_{ij}
\right\}.
\end{split}
\end{equation}

On the other hand, for any $\alpha\in \mathbf R$,
\begin{equation}\label{d-Hess1}
\sum_{i,j=1}^m
\left(
f^\alpha \mathbf H^u_{ij}
\right)_{,ij}=f^\alpha \sum_{i,j=1}^m
\left\{
\mathbf H^u_{ij,ij}+\frac{2\alpha}{f}f_{,i}\mathbf H^u_{ij,j}+
\frac{\alpha(\alpha-1)}{f^2}f_{,i}f_{,j}\mathbf H^u_{ij}
\right\}
\end{equation}
holds. We easily see that
$2\alpha=k(n-1)$ and $\alpha(\alpha-1)=k(n-1)(k(n-2)/4-1)$ hold
if and only if $k=-2$ and $\alpha=1-n$. In this case, we have
\begin{equation}\label{t-scal3}
s_{J,f,-2,n}f^{-1-n}=-\sum_{i,j=1}^m
\left(
f^{1-n}\mathbf H^u_{ij}
\right)_{,ij}.
\end{equation}
By Lemma $2$ in \cite{AM}, for any smooth function $\phi$
on $\mathbf{R}^m$,
\begin{equation}\label{int-by-parts1}
\int_\Delta 
\phi
\sum_{i,j=1}^m
\left(
f^{1-n}\mathbf H^u_{ij}
\right)_{,ij}
\, d\mu
=
\int_\Delta
 f^{1-n}
\sum_{i,j=1}^m
\mathbf H^u_{ij}\phi_{,ij}
\, d\mu
-2
\int_{\partial \Delta}
f^{1-n}\phi \, d\sigma.
\end{equation}
In particular, when $\phi$ is an affine function
\begin{equation}\label{int-by-parts2}
\int_\Delta 
\phi
\sum_{i,j=1}^m
\left(
f^{1-n}\mathbf H^u_{ij}
\right)_{,ij}
\, d\mu
=
-2
\int_{\partial \Delta}
f^{1-n}\phi \, d\sigma
\end{equation}
holds. 
Hence, if we define the constant $c_{\Delta,f,-2,n}$
as
$$
c_{\Delta,f,-2,n}=2\,\dfrac{\displaystyle{\int_{\partial \Delta}f^{1-n}\,d\sigma}}
{\displaystyle{\int_\Delta f^{-1-n}\,d\mu}},
$$
the Futaki invariant \eqref{eq:2.16} is given by
\begin{equation}\label{t-F-inv}
\mathrm{Fut}_{\Delta,f,-2,n}(\phi)
=2\int_{\partial \Delta}f^{1-n}\phi\,d\sigma
-c_{\Delta,f,-2,n}
\int_\Delta
f^{-1-n}\phi \, d\mu
\end{equation}
for any linear function $\phi$ on $\mathbf{R}^m$.

By \eqref{t-scal3} and \eqref{int-by-parts2},
$EH(g_J,f,-2,n)$ is given by
\begin{equation}\label{toric-EH}
EH_{-2,n}(f):=
EH(g_J,f,-2,n)
=\text{Const.}
\dfrac{\displaystyle{\int_{\partial \Delta}f^{2-n}\,d\sigma }}
{\displaystyle{\left(
\int_\Delta f^{-n}\, d\mu
\right)^{\frac{n-2}{n}}}}.
\end{equation}

If there exists a symplectic potential $u$ such that
the $(g_J,f,-2,n)$-scalar curvature is constant,
then $\mathrm{Fut}_{\Delta,f,-2,n}$ vanishes identically and
$f$ is a critical point of $EH_{-2,n}$.

\section{Toric K-stability}

Let $(M,\omega)$ be a $2m$-dimensional
compact toric manifold with the moment image $\Delta\subset
\mathbf{R}^m$.
Following the argument by Donaldson in \cite{donaldson02},
we may define the Donaldson-Futaki invariant
with respect to a positive affine function $f$ on $\Delta$ 
as
\begin{equation}\label{DF inv}
\mathrm{DF}_{\Delta,f,n}(\phi)=
2\int_{\partial \Delta}f^{1-n}\phi\,d\sigma
-c_{\Delta,f,-2,n}
\int_\Delta
f^{-1-n}\phi \, d\mu
\end{equation}
for a convex function $\phi$ on $\Delta$, see also \cite{AM}.
For any affine fuction $\phi$, 
$$\mathrm{Fut}_{\Delta,f,-2,n}(\phi)=\mathrm{DF}_{\Delta,f,n}(\phi).$$
We can prove the following straightforward analogue of the results
in \cite{donaldson02}:

\begin{thm}\label{ex-to-semistable}
Suppose that there exists a symplectic potential $u$ on $\Delta$
such that the $(g_J,f,-2,n)$-scalar curvature is a constant $c$.
Then $c=c_{\Delta,f,-2,n}$ and
$\mathrm{DF}_{\Delta,f,n}(\phi)\ge 0$ for any smooth convex function
$\phi$ on $\Delta$. Equality holds if and only if $\phi$ is affine.
\end{thm}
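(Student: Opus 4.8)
The plan is to mimic Donaldson's integration-by-parts argument, now in the weighted setting where the weight $f^{1-n}$ (on $\partial\Delta$) and $f^{-1-n}$ (on $\Delta$) replace the unweighted measures of the classical $k=0$ case. The starting point is the key identity \eqref{t-scal3}, which rewrites the $(g_J,f,-2,n)$-scalar curvature as
\begin{equation*}
s_{J,f,-2,n}\,f^{-1-n}=-\sum_{i,j=1}^m\left(f^{1-n}\mathbf H^u_{ij}\right)_{,ij},
\end{equation*}
together with the weighted integration-by-parts formula \eqref{int-by-parts1}. First I would pin down the constant: assuming $s_{J,f,-2,n}=c$ identically, I multiply both sides of \eqref{t-scal3} by $f^{-1-n}$ times the constant function $\phi\equiv 1$ and integrate, then apply \eqref{int-by-parts2} with $\phi=1$. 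The left side yields $c\int_\Delta f^{-1-n}\,d\mu$, while the right side collapses to $2\int_{\partial\Delta}f^{1-n}\,d\sigma$, and comparison with the definition of $c_{\Delta,f,-2,n}$ forces $c=c_{\Delta,f,-2,n}$. This handles the normalization claim.

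For the nonnegativity of $\mathrm{DF}_{\Delta,f,n}(\phi)$, I would substitute \eqref{t-scal3} into the definition \eqref{DF inv}. Apply the full formula \eqref{int-by-parts1} (not just the affine special case) to the boundary term $2\int_{\partial\Delta}f^{1-n}\phi\,d\sigma$: this converts it into
\begin{equation*}
\int_\Delta f^{1-n}\sum_{i,j}\mathbf H^u_{ij}\phi_{,ij}\,d\mu-\int_\Delta\phi\sum_{i,j}\left(f^{1-n}\mathbf H^u_{ij}\right)_{,ij}\,d\mu.
\end{equation*}
Replacing the second integrand via \eqref{t-scal3} and using $s_{J,f,-2,n}=c_{\Delta,f,-2,n}$ makes that term exactly cancel the subtracted term $c_{\Delta,f,-2,n}\int_\Delta f^{-1-n}\phi\,d\mu$ in $\mathrm{DF}_{\Delta,f,n}(\phi)$. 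What survives is
\begin{equation*}
\mathrm{DF}_{\Delta,f,n}(\phi)=\int_\Delta f^{1-n}\sum_{i,j=1}^m\mathbf H^u_{ij}\,\phi_{,ij}\,d\mu.
\end{equation*}
Since $f>0$ on $\Delta$ (so $f^{1-n}>0$), the matrix $\mathbf H^u=(u_{,ij})^{-1}$ is positive definite, and $\phi$ convex gives $(\phi_{,ij})\succeq 0$, the integrand is pointwise $\ge 0$ as the trace of a product of two positive-semidefinite matrices. Hence $\mathrm{DF}_{\Delta,f,n}(\phi)\ge 0$.

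For the equality case, note that vanishing of the integral forces $\sum_{i,j}\mathbf H^u_{ij}\phi_{,ij}=0$ almost everywhere on $\Delta$; since $\mathbf H^u$ is positive definite and $(\phi_{,ij})\succeq 0$, the trace of their product vanishes only where $(\phi_{,ij})=0$, i.e.\ the Hessian of $\phi$ is identically zero, so $\phi$ is affine. The converse is immediate since an affine $\phi$ has vanishing Hessian, making the surviving integrand zero.

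I expect the main technical obstacle to be the careful bookkeeping in the cancellation step, where one must verify that the subtracted $c_{\Delta,f,-2,n}$-term and the term produced by \eqref{int-by-parts1} genuinely match—this hinges on using the constancy hypothesis $s_{J,f,-2,n}=c_{\Delta,f,-2,n}$ together with \eqref{t-scal3} in precisely the right place, and on the fact that \eqref{int-by-parts1} holds for arbitrary smooth $\phi$, not merely affine ones. A secondary subtlety is the equality discussion: one should confirm that the trace $\operatorname{tr}(\mathbf H^u(\phi_{,ij}))$ vanishing pointwise on an open dense set, combined with the regularity of $\phi$, indeed yields $\phi$ affine on all of $\Delta$.
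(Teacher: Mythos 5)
Your proposal is correct and follows essentially the same route as the paper: both pin down $c=c_{\Delta,f,-2,n}$ by integrating \eqref{t-scal3} against an affine function via \eqref{int-by-parts2}, then use \eqref{int-by-parts1} to reduce $\mathrm{DF}_{\Delta,f,n}(\phi)$ to $\int_\Delta f^{1-n}\sum_{i,j}\mathbf H^u_{ij}\phi_{,ij}\,d\mu\ge 0$. Your explicit treatment of the equality case (trace of a product of a positive definite and a positive semidefinite matrix vanishes only if the latter vanishes) is a detail the paper leaves implicit, but it is the intended argument.
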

\begin{proof}
Suppose that $s_{J,f,-2,n}=c$.
Then
$$
c\int_\Delta f^{-1-n}\,d\mu=
- \int_\Delta\,\sum_{i,j=1}^m(f^{1-n}\mathbf H^u_{ij})_{,ij}\,d\mu
=2\int_{\partial \Delta}f^{1-n}\,d\sigma
$$
by \eqref{t-scal3} and \eqref{int-by-parts2}.
Hence $c=c_{\Delta,f,-2,n}$. By \eqref{int-by-parts1},
\begin{equation}\label{DF inv2}
\begin{split}
\mathrm{DF}_{\Delta,f,n}(\phi) &=
-\int_\Delta \left(
c_{\Delta,f,-2,n}f^{-1-n}+\sum_{i,j=1}^m\left(
f^{1-n}\mathbf H^u_{ij}
\right)_{,ij}
\right)\phi\,d\mu\\
&\hspace{4mm}+\int_\Delta f^{1-n}\sum_{i,j=1}^m\mathbf H^u_{ij}\phi_{,ij}\,d\mu\\
&=\int_\Delta f^{1-n}\sum_{i,j=1}^m\mathbf H^u_{ij}\phi_{,ij}\,d\mu\ge 0.
\end{split}
\end{equation}
\end{proof}

\begin{defn}\label{K-stable}
Let $\Delta\subset \mathbf{R}^m$ be a Delzant polytope, $n\not=0,1,2$ and $f$ a
positive affine function on $\Delta$. $(\Delta,f,n)$ is K-semistable
if $\mathrm{DF}_{\Delta,f,n}(\phi)\ge 0$ for any piecewise linear convex function
$\phi$ on $\Delta$. $(\Delta,f,n)$ is K-polystable if
it is K-semistable and the equality $\mathrm{DF}_{\Delta,f,n}(\phi)=0$ is only
possible for $\phi$ affine linear.
\end{defn}

Since any piecewise linear convex function on $\Delta$ can be approximated by
smooth convex functions on $\Delta$, the existence of a constant
$(g_J,f,-2,n)$-scalar curvature metric implies the K-semistability
of $(\Delta,f,n)$.

We next consider compact toric surfaces and prove
that the positivity of Donaldson-Futaki invariant
for simple piecewise linear functions implies
K-polystability. This is a 
generalization of
the result by Donaldson \cite{donaldson02} and Wang-Zhou
\cite{Wang-Zhou11,Wang-Zhou14}.
The proof is similar to the one given in \cite{Wang-Zhou14}, but
to make this paper as self-contained as possible,
we give a proof here.

Let $P\subset \mathbf{R}^m$ be an $m$-dimensional open convex polytope,
$P^*$ a union of $P$ and the facets of $P$. Denote
$$
\mathcal C_1:=\{u:P^*\to \mathbf R\text{, convex}\,|\,
\int_{\partial P}u\,d\sigma<\infty\}.
$$
For positive bounded functions $\alpha,\beta$ on $\bar{P}$
and an affine function $A$ on $\mathbf{R}^m$,
we define the linear functional $\mathcal L$ on $\mathcal C_1$ as
\begin{equation}\label{gDF inv}
\mathcal L(u):=\int_{\partial P}\alpha u\,d\sigma
-\int_PA\beta u\,d\mu.
\end{equation}

\begin{thm}\label{gen-D,W-Z}
Suppose that $\mathcal L(f)=0$ for any affine function $f$ on $\mathbf{R}^m$.
When $m=2$, the following two conditions are equivalent.
\begin{enumerate}
\item $\mathcal L(u)\ge 0$ for any $u\in \mathcal C_1$ and
the equality holds if and only if $u$ is affine.
\item $\mathcal L(u)>0$ for any simple piecewise linear convex function $u$
with nonempty
crease.
\end{enumerate}
\end{thm}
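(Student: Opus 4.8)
The plan is to establish the equivalence by showing both directions, with the substantive content lying in the implication $(2)\Rightarrow(1)$. The direction $(1)\Rightarrow(2)$ is essentially immediate: a simple piecewise linear convex function with nonempty crease is a particular element of $\mathcal C_1$ and is not affine (an affine function has empty crease), so $(1)$ directly forces $\mathcal L(u)>0$ on exactly this class. Thus the heart of the matter is to deduce the strong positivity statement over all of $\mathcal C_1$ from positivity on the restricted family of simple piecewise linear convex functions.

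For $(2)\Rightarrow(1)$, the strategy is a reduction-and-approximation argument exploiting the two-dimensional hypothesis $m=2$. First I would reduce from general $u\in\mathcal C_1$ to piecewise linear convex functions: any convex function with $\int_{\partial P}u\,d\sigma<\infty$ can be approximated from below by piecewise linear convex functions (take suprema of finitely many supporting affine functions), and since $\mathcal L$ is linear and vanishes on affine functions, I would argue that $\mathcal L$ is monotone under this approximation in a way that lets me pass positivity from the piecewise linear case to the general case. The second and main reduction is from a \emph{general} piecewise linear convex function to \emph{simple} ones (those with a single crease, i.e.\ the maximum of just two affine functions). Here the idea is that a general piecewise linear convex function in the plane has a crease set which is a planar graph, and one decomposes it as a sum of simple pieces. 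Concretely, writing $u=\max_\alpha \ell_\alpha$ over finitely many affine functions, I would induct on the number of linear pieces, peeling off one simple ``fold'' at a time: subtracting an affine function so that one crease segment becomes a boundary of a region where $u$ agrees with a single $\ell_\alpha$, and expressing the difference as a simple piecewise linear convex function plus a remainder with strictly fewer pieces. Because $\mathcal L$ is linear and kills affine functions, this expresses $\mathcal L(u)$ as a sum of contributions $\mathcal L(v_i)$ with each $v_i$ simple, each $\ge 0$ by hypothesis $(2)$.

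The main obstacle I expect is making the decomposition into simple pieces geometrically precise in the plane and ensuring each simple piece genuinely lies in $\mathcal C_1$ with a nonempty crease, so that hypothesis $(2)$ applies and the summands are nonnegative. The delicate point is the equality case: to conclude that $\mathcal L(u)=0$ forces $u$ affine, I need each simple summand to satisfy $\mathcal L(v_i)=0$, hence (by the strict positivity in $(2)$) each $v_i$ must be affine, hence $u$ itself has empty crease and is affine. Controlling the crease structure under the inductive peeling—guaranteeing that nontrivial creases are not lost or created spuriously—is where the genuinely two-dimensional combinatorics of the crease graph enters, and this is why the hypothesis $m=2$ is used. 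I would model this step on the corresponding argument in \cite{Wang-Zhou14}, adapting their crease-reduction lemma to the present weights $\alpha$, $\beta$ and affine factor $A$, noting that the argument only uses positivity and boundedness of $\alpha,\beta$ together with the normalization $\mathcal L|_{\text{affine}}=0$, all of which hold in our setting.
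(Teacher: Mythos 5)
The direction $(1)\Rightarrow(2)$ is indeed immediate, but your plan for $(2)\Rightarrow(1)$ rests on a decomposition that does not exist. You propose to write a general piecewise linear convex function on the two-dimensional polytope as an affine function plus a sum of simple piecewise linear convex pieces, peeling off one fold at a time. The obstruction is concrete: the crease of an sPL function $\max\{0,L\}$ is a full chord $P\cap\{L=0\}$ crossing $P$ from boundary to boundary, and for a finite sum of convex functions the non-differentiability locus is exactly the union of the summands' loci (subdifferentials add, and a Minkowski sum of convex sets is a singleton if and only if each summand is), so creases can never cancel. Hence every function in the cone generated by affine functions and sPL functions has crease set equal to a union of full chords. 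But already $u=\max\{\ell_1,\ell_2,\ell_3\}$ for a generic triple of affine functions has a Y-shaped crease with an interior vertex, which is not a union of chords; no decomposition of the kind you describe exists, and the induction on the number of linear pieces cannot be started. This is precisely why the passage from positivity on sPL functions to positivity on all of $\mathcal C_1$ is a nontrivial theorem (due in the constant-weight case to Donaldson and Wang--Zhou) rather than a formal cone computation. Your first reduction has a further defect independent of this: approximating $u\in\mathcal C_1$ from below by PL convex functions and passing to the limit can at best yield the weak inequality $\mathcal L(u)\ge 0$, since strict inequalities degenerate under limits; it gives no access to the equality case, i.e.\ to showing that $\mathcal L(u)=0$ forces $u$ affine, which is part of statement $(1)$.

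The paper's actual proof is variational and argues by contradiction, following Wang--Zhou. Assuming $(2)$ holds but $(1)$ fails, one normalizes to the slice $\Tilde{\mathcal C}_1$ of functions with $\int_{\partial P}\alpha u\,d\sigma=1$ and $\inf_P u=u(p_0)=0$, shows $\mathcal L$ is bounded below there (via Donaldson's Lemma 5.1.3), and extracts from a minimizing sequence a minimizer $u_0\in\Tilde{\mathcal C}_1$ with $\mathcal L(u_0)\le 0$ (via Donaldson's Corollary 5.2.5). The minimizer is a generalized solution of $\det D^2u_0=0$, and the hypothesis $m=2$ enters here, not through crease combinatorics of PL functions: the contact set $\mathcal T=\{u_0=0\}$ is convex with extreme points on $\partial P$, so in dimension two it is either a segment with endpoints on $\partial P$ or a polygon with vertices on $\partial P$ (in higher dimensions $\mathcal T$ can be more complicated, which is where the argument breaks). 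From a supporting line of $\mathcal T$ one builds an sPL function $\psi$ with nonempty crease, proves that the infimum $a_0$ of the one-sided normal derivative of $u_0$ across the crease is zero (otherwise $u_0-a_0\psi$ would contradict minimality, using $\mathcal L(\psi)>0$ from $(2)$), and finally perturbs $u_0$ on the thin region $G_\varepsilon=\{u_0<\varepsilon\psi\}$ to produce a competitor with strictly smaller $\mathcal L$, a contradiction. If you want to salvage your write-up, this minimization scheme is the argument to adapt; the decomposition route cannot be repaired.
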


Here a convex function $u$ is simple piecewise linear, sPL for short,
if $u=\max \{L,0\}$ for a non-zero affine function $L$.
The crease of sPL convex function $u$ is the intersection of 
$P$ and $\{L=0\}$.

\begin{proof}
It is sufficient to prove that (2) implies (1).
Suppose that
$\mathcal L$ is positive for any sPL convex function with nonempty crease. 
Moreover we assume
the case (1) does not occur, that is,
the one of the following holds:

\vspace{2mm}

$\circ$ There exists $v\in \mathcal C_1$ such that $\mathcal L(v)<0$.

$\circ$ For any $u\in \mathcal C_1$, $\mathcal L(u)\ge 0$ and
there exists $v\in \mathcal C_1\setminus\{\text{affine function}\}$\\
\hspace{6mm} such that $\mathcal L(v)=0$.

\vspace{2mm}

We fix $p_0\in P$ and denote
$$
\Tilde{\mathcal C}_1:=\left\{u\in \mathcal C_1\,|\,
\int_{\partial P}\alpha u \, d\sigma=1,
\inf_{P}u=u(p_0)=0\right\}.
$$
Since $\mathcal L$ vanishes on the set of affine functions
and $\mathcal L(cu)=c\mathcal L(u)$ for any $c>0$ and $u\in \mathcal {C}_1$,
we may assume $v$ in the condition above is an element of $\Tilde{\mathcal C}_1$.

\begin{lem}\label{bdd}
The functional $\mathcal L:\Tilde{\mathcal C}_1\to \mathbf R$
is bounded from below.
\end{lem}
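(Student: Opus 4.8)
The plan is to use the normalization built into $\Tilde{\mathcal C}_1$ to reduce the claim to a single geometric estimate for nonnegative convex functions. For $u\in\Tilde{\mathcal C}_1$ the constraint $\int_{\partial P}\alpha u\,d\sigma=1$ turns the definition \eqref{gDF inv} into $\mathcal L(u)=1-\int_P A\beta u\,d\mu$, so it is enough to bound $\int_P A\beta u\,d\mu$ from above. Since $\alpha,\beta$ are positive and bounded on $\bar P$ and $A$ is affine (hence bounded on the bounded set $\bar P$), I would fix constants $\alpha_0>0$ and $C'>0$ with $\alpha\ge\alpha_0$ and $|A\beta|\le C'$ on $\bar P$. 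The normalization $\inf_P u=u(p_0)=0$ forces $u\ge0$, so $\int_P A\beta u\,d\mu\le C'\int_P u\,d\mu$ and $\int_{\partial P}u\,d\sigma\le\alpha_0^{-1}$. Everything then hinges on controlling the interior integral $\int_P u\,d\mu$ by the boundary integral $\int_{\partial P}u\,d\sigma$.

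The core is the convexity estimate that there is a constant $C=C(P,p_0)$ with $\int_P u\,d\mu\le C\int_{\partial P}u\,d\sigma$ for every nonnegative convex $u$ vanishing at $p_0$. To prove it I would parametrize $P$ radially from $p_0$, writing $x=p_0+s\theta$ with $\theta\in S^{m-1}$, $0\le s\le r(\theta)$, and $y(\theta)=p_0+r(\theta)\theta\in\partial P$. Convexity together with $u(p_0)=0$ and $u\ge0$ gives the pointwise comparison $u(p_0+s\theta)\le u(y(\theta))$ for all $s\le r(\theta)$, whence $\int_0^{r(\theta)}u(p_0+s\theta)s^{m-1}\,ds\le r(\theta)^m u(y(\theta))/m$. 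Converting the solid angle back to the surface measure via $d\sigma=r(\theta)^{m-1}\langle\nu,\theta\rangle^{-1}\,d\theta$, with $\nu$ the outward normal and $\langle\nu,\theta\rangle>0$ since $P$ is convex and $p_0$ interior, collapses the radial factors and leaves $\int_P u\,d\mu\le m^{-1}\int_{\partial P}r(\theta)\langle\nu,\theta\rangle u\,d\sigma\le(\diam P/m)\int_{\partial P}u\,d\sigma$.

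Combining the two steps yields $\mathcal L(u)\ge 1-C'C\alpha_0^{-1}$ uniformly over $u\in\Tilde{\mathcal C}_1$, which is the asserted lower bound. The hard part is the convexity estimate of the middle paragraph: this is precisely where the normalization $\inf_P u=u(p_0)=0$ is indispensable, since the pointwise comparison $u(x)\le u(y(\theta))$ breaks down without both $u\ge0$ and a fixed interior vanishing point. The remaining ingredients—positivity and boundedness of $\alpha,\beta$ and the boundedness of the affine $A$—are routine, and the whole estimate is dimension-free, so the restriction $m=2$ in Theorem \ref{gen-D,W-Z} is not needed for this lemma.
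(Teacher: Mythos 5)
Your proof is correct, and its skeleton is the same as the paper's: normalize so that $\mathcal L(u)=1-\int_P A\beta u\,d\mu$, bound $A\beta$ on $\bar P$, and reduce everything to the convexity estimate $\int_P u\,d\mu\le C\int_{\partial P}u\,d\sigma$. The one genuine difference is that the paper simply cites Lemma 5.1.3 of Donaldson \cite{donaldson02} for that estimate, whereas you prove it from scratch by the radial argument: writing $x=p_0+s\theta$, using $u(p_0)=0$ and $u\ge 0$ to get $u(p_0+s\theta)\le u(y(\theta))$ on each ray, and converting the angular measure to surface measure via $d\theta=r(\theta)^{1-m}\langle\nu,\theta\rangle\,d\sigma$, which yields the explicit constant $C=\diam(P)/m$ (and in fact you need no lower bound on $\langle\nu,\theta\rangle$, only its positivity to justify the a.e.\ change of variables, which holds since $P$ is convex with $p_0$ interior). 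This makes your version self-contained where the paper's is not, and your argument is honest about where the normalization $\inf_P u=u(p_0)=0$ enters, which Donaldson's general lemma does not require but which suffices here. Two small points to be aware of: first, the comparison $u(p_0+s\theta)\le u(y(\theta))$ uses convexity of $u$ on the closed segment $[p_0,y(\theta)]$, which is legitimate for $u\in\mathcal C_1$ because for a.e.\ $\theta$ the ray exits through the relative interior of a facet, so $y(\theta)\in P^*$; second, in the toric application $d\sigma$ is the lattice-induced boundary measure rather than the Euclidean one, but on each facet these differ by a constant factor, so the discrepancy is absorbed into the constant (or into $\alpha$). Your closing remark is also consistent with the paper: the restriction $m=2$ in Theorem \ref{gen-D,W-Z} plays no role in this lemma and is used only later, in the analysis of the contact set $\mathcal T$.
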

\begin{proof}
By Lemma $5.1.3$ in \cite{donaldson02},
there exists a constant $C>0$ such that
$$
\int_P u\,d\mu \le C\int_{\partial P}u\,d\sigma
$$
for all $u\in \Tilde{C}_1$. Since $\alpha,\beta$ are positive and bounded
on $\bar{P}$
$$
\int_P\beta u\,d\mu \le \sup_{\bar{P}}\beta \int_Pu\,d\mu
\le \dfrac{C\sup_{\bar{P}} \beta}
{\inf_{\bar{P}} \alpha}=:C'
$$
for $u\in\Tilde{\mathcal C}_1$. Hence, on $\Tilde{\mathcal C}_1$,
$$
\mathcal L(u)
=1-\int_P A\beta u\,d\mu
\ge 1-\max_{\bar{P}}\lvert A\rvert \int_P\beta u\,d\mu
\ge 1-\max_{\bar{P}}\lvert A\rvert C'.
$$
\end{proof}
By assumption, $\inf_{\Tilde{C}_1}\mathcal L\le 0$.
Moreover we see that there exists $u_0\in \Tilde{C}_1$ which attains
the infimum of $\mathcal L$ on $\Tilde{\mathcal C}_1$ 
by the same argument with the proof of Lemma $4.2$ in \cite{Wang-Zhou14}
as follows. Let $\{u_k\}$ be a sequence in $\Tilde{C}_1$ with
$\lim_{k\to \infty}\mathcal L(u_k)=\inf_{\Tilde{C}_1}\mathcal L$.
By Lemma \ref{bdd} above and Corollary $5.2.5$ in \cite{donaldson02},
there is the limit function $u_0$ convex on $P^*$.
More precisely,
$$
u_0(p)=
\begin{cases}
\displaystyle{\lim_{k\to \infty}u_k(p)} & \text{if }p\in P\\
\displaystyle{\lim_{t\nearrow 1}u_0((1-t)p_0+tp) }& \text{if }p \text{ is in a facet of }P.
\end{cases}
$$
The limit function $u_0$ satisfies
$$
\int_PA\beta u_0\,d\mu=\lim_{k\to \infty}\int_PA\beta u_k\,d\mu
\text{ and }\inf_Pu_0=u_0(p_0)=0.
$$
By convexity, $\displaystyle{\int_{\partial P}\alpha u_0\,d\sigma\le 1}$.
Suppose that $\displaystyle{\int_{\partial P}\alpha u_0\,d\sigma< 1}$.
Then
\begin{align*}
\mathcal L(u_0) 
&=
\int_{\partial P}\alpha u_0\,d\sigma-\int_PA\beta u_0\,d\mu<1-\int_PA\beta u_0\,d\mu\\
&=\lim_{k\to \infty}\mathcal L(u_k)=\inf_{\Tilde{C}_1}\mathcal L\le 0.
\end{align*}
On the other hand, since $\Tilde{u}_0:=
\displaystyle{\left(\int_{\partial P}\alpha u_0\,d\sigma\right)^{-1}}u_0
\in \Tilde{\mathcal C}_1$,
$$
\left(\int_{\partial P}\alpha u_0\,d\sigma\right)^{-1}\mathcal L(u_0)
=\mathcal L(\Tilde{u}_0)\ge \inf_{\Tilde{\mathcal C}_1}\mathcal L.
$$
Hence $\mathcal L(u_0)<\displaystyle{\left(\int_{\partial P}\alpha u_0\,d\sigma\right)
^{-1}}\mathcal L(u_0)$. Since $\mathcal L(u_0)<0$, 
$\displaystyle{\int_{\partial P}\alpha u_0\,d\sigma> 1}$.
It is a contradiction.
Therefore $u_0\in \Tilde{\mathcal C}_1$ and it attains the infimum of $\mathcal L$
on $\Tilde{\mathcal C}_1$.

By the same argument with the proof of Lemma $4.3$ in \cite{Wang-Zhou14},
we see that $u_0$ is a generalized solution to the degenerate Monge-Amp\`ere
equation
$$
\det D^2u=0.
$$
By convexity, $\mathcal T=\{x\in P\,|\, u_0(x)=0\}$ is convex. 
Moreover
any extreme point of $\mathcal T$ is a boundary point of $P$
by Lemma $4.1$ in \cite{Wang-Zhou11}.
Since $P$ is two dimensional, $\mathcal T$ is either a line segment through
$p_0$ with both endpoints on $\partial P$ or a convex polygon with vertices on
$\partial P$. Note here that if the dimension of $P$ is greater than two
the convex set $\mathcal T$ may be more complicated.
We set an affine function $L$ on $\mathbf{R}^2$ as follows.
When $\mathcal T$ is a line segment,
$$
L(x):=\langle n,x-p_0\rangle,
$$
where $n$ is a unit normal vector of $\mathcal T$.
When $\mathcal T$ is a polygon, 
$$
L(x):=\langle n,x-p_1\rangle,
$$
where $p_1\in \partial \mathcal T\setminus \partial P$ and
$n$ is the outer unit normal vector of $\partial \mathcal T$
at $p_1$. In either case,
$\psi=\max\{0,L\}$ is a sPL convex function with nonempty crease.

We next define a function $a$ as
$$
a(p)=\lim_{t\searrow 0}\frac{u_0(p+tn)-u_0(p)}{t}.
$$
Here
$p\in \mathcal T$ when $\mathcal T$ is a line segment or
$p$ is in the edge of $\mathcal T$ containing $p_1$ when
$\mathcal T$ is a polygon. By convexity of $u_0$, the limit exists and is
nonnegative for any $p$.
\begin{lem}
$a_0:=\inf a=0$.
\end{lem}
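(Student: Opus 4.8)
The plan is to prove $a_0=0$ by contradiction, using the minimality of $u_0$ together with a flattening of $u_0$ by the sPL function $\psi=\max\{0,L\}$ already constructed above. Suppose $a_0=\inf a>0$. I would then set $v:=u_0-a_0\psi$ and argue that $v$ is an admissible competitor in $\Tilde{\mathcal C}_1$ (after renormalization) whose $\mathcal L$-value is strictly smaller than $\mathcal L(u_0)=\inf_{\Tilde{\mathcal C}_1}\mathcal L$, which is absurd.

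The first and main step is to check that $v$ is convex. Splitting along the crease $\{L=0\}$, on $\{L\le 0\}$ one has $v=u_0$ and on $\{L\ge 0\}$ one has $v=u_0-a_0L$; both are convex, the latter because $a_0L$ is affine. What must be verified is that the normal-derivative jump of $v$ across the crease is nonnegative. Here the geometry of the previous step is essential: since every extreme point of $\mathcal T$ lies on $\partial P$ and $\{L=0\}$ meets the convex set $\bar P$ in a single chord, the crease $\{L=0\}\cap P$ runs from boundary to boundary and coincides with $\mathcal T$ (line-segment case) or with the edge of $\mathcal T$ through $p_1$ (polygon case). Reading off the transverse slice $s\mapsto u_0(p+sn)$, on the $\{L>0\}$ side the one-sided slope equals $a(p)\ge a_0$, while on the $\{L<0\}$ side the slope is $\le 0$ (and is exactly $0$ in the polygon case, where $u_0$ vanishes just inside $\mathcal T$). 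Hence the right slope of $v$ at the crease is $a(p)-a_0\ge 0$ and its left slope is $\le 0$, so the kink is upward; since $\psi=0$ on the crease the two pieces agree there and their tangential derivatives match, so $v$ is globally convex, and $v\le u_0$ gives $v\in\mathcal C_1$.

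Granting convexity, I would collect the normalization data. Because $p_0\in\mathcal T\subseteq\{L\le 0\}$ we have $\psi(p_0)=0$, so $v(p_0)=0$; and convexity of the slice with $a(p)\ge a_0$ gives $u_0(p+sn)\ge a_0 s$ for $s\ge 0$, whence $v\ge 0$ and $\inf_P v=v(p_0)=0$. Setting $I:=\int_{\partial P}\alpha v\,d\sigma$, the inequality $v\le u_0$ yields $I\le 1$, and $I>0$ (otherwise $v\equiv 0$, i.e. $u_0=a_0\psi$, forcing $\mathcal L(u_0)=a_0\mathcal L(\psi)>0$, contrary to $\mathcal L(u_0)\le 0$). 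Thus $\Tilde v:=v/I\in\Tilde{\mathcal C}_1$. By linearity of $\mathcal L$ and $\mathcal L(\psi)>0$, which holds by hypothesis (2) since $\psi$ is sPL with nonempty crease, $\mathcal L(v)=\mathcal L(u_0)-a_0\mathcal L(\psi)<\mathcal L(u_0)\le 0$, so $\mathcal L(\Tilde v)=\mathcal L(v)/I\le\mathcal L(v)<\inf_{\Tilde{\mathcal C}_1}\mathcal L$, contradicting $\Tilde v\in\Tilde{\mathcal C}_1$. Therefore $a_0=0$.

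The hard part is the convexity of $v=u_0-a_0\psi$: subtracting a convex function usually breaks convexity, and the construction succeeds only because the crease of $\psi$ is precisely the boundary-to-boundary chord carrying $\mathcal T$, so that the one-sided normal derivatives of $u_0$ have opposite signs on the two sides and the flattening never over-subtracts. The delicate bookkeeping is with the one-sided slopes of the merely convex generalized solution $u_0$ along this chord; the degenerate Monge-Amp\`ere identity $\det D^2u_0=0$ from the previous step is what makes these transverse slopes behave as claimed.
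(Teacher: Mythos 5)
Your argument is correct and is essentially the paper's own proof: assume $a_0>0$, subtract $a_0\psi$ from $u_0$, check convexity and the normalizations, rescale into $\Tilde{\mathcal C}_1$, and use $\mathcal L(\psi)>0$ together with $\mathcal L(u_0)\le 0$ to contradict minimality of $u_0$. The only difference is that you spell out the convexity of $u_0-a_0\psi$ across the crease (and the identification of the crease with $\mathcal T$, resp.\ its edge through $p_1$), which the paper dispatches with ``by the definition of $a_0$''; this is a welcome filling-in of detail, not a different route.
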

\begin{proof}We give a proof only when $\mathcal T$ is a line segment since
the case when $\mathcal T$ is a polygon is similar.
Suppose $a_0>0$. Denote $u':=u_0-a_0\psi$.
Then
$\displaystyle{\int_{\partial P}\alpha u'\,d\sigma}<1$.
By the definition of $a_0$, $u'$ is convex on $P^*$ and
$$
\inf _Pu'=u'(p_0)=u_0(p_0)-a_0\psi(p_0)=0.
$$
Since $\mathcal L(\psi)>0$ by assumption,
$$
\mathcal L(u_0)=\mathcal L(u')+a_0\mathcal L(\psi)>\mathcal L(u').
$$
Hence, since $\Tilde{u'}:=
\displaystyle{\left(\int_{\partial P}\alpha u'\,d\sigma\right)^{-1}}u'
\in \Tilde{\mathcal C}_1$,
$$
0\ge \mathcal L(u_0)> \mathcal L(u')>\mathcal L(\Tilde{u'}).
$$
This is a contradiction.
\end{proof}
By the definition of $\mathcal T$ and $L$,
$u_0$ is positive on $P\cap \{L>0\}$. For any $\varepsilon >0$,
$G_\varepsilon:=\{x\in P\,|\,u_0(x)<\varepsilon \psi(x)\}$
is nonempty because $a_0=0$.
Since $\mathcal T\subset \{L\le 0\}$, there exists $\delta(\varepsilon)>0$
such that
$G_\varepsilon \subset \{0\le L<\delta(\varepsilon)\}$ and
$\lim_{\varepsilon \searrow 0}\delta(\varepsilon)=0$.
Denote
$$
u_1:=u_0\chi_{-},\ \ u_2:=(u_0-\varepsilon \psi)\chi_+,\ \ 
\Tilde{u_2}:=\max \{0,u_2\},
$$
where
$$
\chi_-(x)=
\begin{cases}
1 & \text{when }L(x)<0\\
0 & \text{otherwise}
\end{cases},\ \ 
\chi_+=1-\chi_-.
$$
It is easy to see that
$u_1+\Tilde{u_2}\ge 0$ is convex and
$(u_1+\Tilde{u_2})(p_0)=0$.
Denote $\Tilde{u}:=u_1+\Tilde{u_2}+\varepsilon \psi$.
Then we have
$$
\Tilde{u}-u_0=
\Tilde{u_2}-u_2
=
\begin{cases}
-u_2=\varepsilon L-u_0\le \varepsilon \delta(\varepsilon) & \text{on }G_\varepsilon,\\
0 & \text{on }G_\varepsilon^c.
\end{cases}
$$
Hence there exsits a positive constant $C$ such that
$$
\mathcal L(\Tilde{u}-u_0)=
\int_{\partial P}\alpha(\Tilde{u}-u_0)\,d\sigma-
\int_PA\beta(\Tilde{u}-u_0)\,d\mu <C\varepsilon \delta(\varepsilon).
$$
Therefore we have
$$
\mathcal L(u_1+\Tilde{u_2})=\mathcal L(\Tilde{u})-\varepsilon \mathcal L(\psi)
<\mathcal L(u_0)+\varepsilon(C\delta(\varepsilon)-\mathcal L(\psi))<\mathcal L
(u_0).
$$
for any sufficiently small $\varepsilon>0$.
Denote $u_3:=\displaystyle{\left(\int_{\partial P}\alpha(u_1+\Tilde{u_2})\,
d\sigma\right)^{-1}(u_1+\Tilde{u_2})}\in \Tilde{\mathcal C}_1$.
Since $u_1+\Tilde{u_2} \le u_0$,
$\displaystyle{\int_{\partial P}\alpha(u_1+\Tilde{u_2})\,d\sigma \le 1}$.
Therefore we obtain
$\mathcal L(u_3)\le \mathcal L(u_1+\Tilde{u_2})<\mathcal L(u_0)$.
This is a contradiction. This completes the proof of Theorem \ref{gen-D,W-Z}.
\end{proof}

Finally we observe by numerical analysis that there exists a Killing vector field which 
gives a toric K-stable pair in the sense of Apostolov-Maschler.

Let $\Delta_p$ be the convex hull of
$(0,0),(p,0),(p,1-p)$ and $(0,1)$
for $0<p<1$. By Delzant construction, 
the K\"ahler class of a toric K\"ahler metric on the 
one point blow up of $\mathbf{C}P^2$
corresponds to $\Delta_p$ up to multiplication of a positive constant.

Denote
$$
\mathcal P:=\{(a,b,c)\in \mathbf{R}^3\,|\, 
c>0, \,ap+c>0, \,ap+b(1-p)+c>0, \,b+c>0\}.
$$
An affine function $a\mu_1+b\mu_2+c$ is positive on $\Delta_p$ if and only if
$(a,b,c)\in \mathcal P$. By the argument in Section $3$ and $4$,
$\mathrm{Fut}_{\Delta_p,a\mu_1+b\mu_2+c,-2,n}$ vanishes if and only if
$(a,b,c)\in \mathcal P$ is a critical point
of
$$
EH_n(a,b,c):=\dfrac{\displaystyle{\int_{\partial \Delta_p}(a\mu_1+b\mu_2+c)^{2-n}
\,d\sigma}}
{\displaystyle{\left(\int_{\Delta_p}(a\mu_1+b\mu_2+c)^{-n}\,d\mu\right)^{\frac{n-2}{n}}}}.
$$
For $n=4$, the authors identified in \cite{FO17} such critical points as follows:
\begin{equation*}
\begin{split}
&(a)\ \ C\left(1,0,\dfrac{p(1-\sqrt{1-p})}{2\sqrt{1-p}+p-2}\right),\ C>0,\ 0<p<1,\\
&(b)\ \ C\left(-1,0,\dfrac{p(3p\pm \sqrt{9p^2-8p})}{2(p\pm \sqrt{9p^2-8p})}\right),\ \ 
C>0,\ \frac89 <p<1,\\
&(c)\ \ C\left(-p^2+4p-2\pm \sqrt{F(p)},\pm2 \sqrt{F(p)},-p^2-2p+2\mp \sqrt{F(p)}
\right),\ \ 
C>0,\ 0<p<\alpha,
\end{split}
\end{equation*}
where $\alpha\approx 0.386$ is a real root of
$$
F(x):=x^4-4x^3+16x^2-16x+4=0.
$$
For the affine functions corresponding to $(a)$ and $(b)$,
LeBrun gave concrete examples of cKEM metrics in \cite{L2}.
Hence $(\Delta_p,a\mu_1+b\mu_2+c,4)$ is K-polystable by Corollary $3$
in \cite{AM}. On the other hand, in case $(c)$, 
we do not know whether 
there exists cKEM metrics.
Denote 
\begin{equation*}
\begin{split}
f^\pm_p&=
(-p^2+4p-2\pm \sqrt{F(p)})\mu_1\pm2 \sqrt{F(p)}\mu_2-
p^2-2p+2\mp \sqrt{F(p)}\\
&=:a_p^\pm \mu_1+b_p^\pm \mu_2+c_p^\pm.
\end{split}
\end{equation*}
By Theorem \ref{gen-D,W-Z}, if $\mathrm{DF}_{\Delta_p,f^\pm_p,4}(\phi)$ is positive
for any sPL convex function $\phi$,
$(\Delta_p,f^\pm_p,4)$ is K-polystable.
According to the position of the boundary points ${\bf u,v}$
of creases, we divide into the following six cases.

\begin{description}
\item[$1.\ {\bf u}=(0,e),\ {\bf v}=(p,f)\ (0\le e\le 1,0\le f\le 1-p)$ ]

In this case, the corresponding sPL convex function is
$\phi=\max\{(f-e)\mu_1-p\mu_2+pe,0\}$.
Then
\begin{align*}
\int_{\partial \Delta_p}\frac{\phi }{(f_p^\pm)^3}\,d\sigma
=&\int_0^p\frac{(f-e)\mu_1+pe}{(a_p^\pm \mu_1+c_p^\pm)^3}\,d\mu_1+
\int_0^f \frac{p(f-\mu_2)}{(a_p^\pm p+b_p^\pm \mu_2+c_p^\pm)^3}\,d\mu_2\\
&+\int_0^e\frac{p(e-\mu_2)}{(b_p^\pm \mu_2+c_p^\pm)^3}\,d\mu_2
\end{align*}
and
$$
\int_{\Delta_p}\frac{\phi}{(f_p^\pm)^5}\,d\mu
=\int_0^p\,d\mu_1\int_0^{\frac{f-e}{p}\mu_1+e}
\frac{(f-e)\mu_1-p\mu_2+pe}{(f_p^\pm)^5}\, d\mu_2
$$

It is too long and complicated to give the full description of $\mathrm{DF}_{\Delta_p,
f_p^\pm,4}(\phi)$. We put the graph of $\mathrm{DF}_{\Delta_{0.1},f^-_{0.1},4}$,
as a function of $(e,f)$,
instead.
All graphics in this article are drawn by \textit{Mathematica}.

\includegraphics{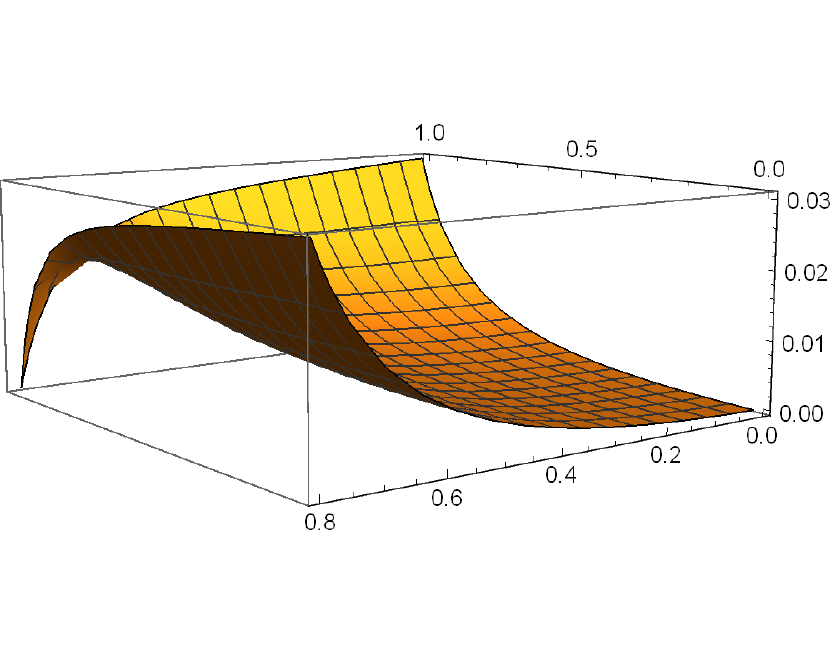}

\item[$2.\ {\bf u}=(e,0),\ {\bf v}=(f,1-f)\ (0\le e\le p,0\le f\le p)$ ]
In this case, the corresponding sPL convex function is
$\phi=\max\{(f-1)\mu_1+(f-e)\mu_2+(1-f)e,0\}$.
Then
\begin{align*}
\int_{\partial \Delta_p}\frac{\phi }{(f_p^\pm)^3}\,d\sigma
=&\int_0^e\frac{(1-f)(e-\mu_1)}{(a_p^\pm \mu_1+c_p^\pm)^3}\,d\mu_1+
\int_0^1 \frac{(f-e)\mu_2+(1-f)e}{(b_p^\pm \mu_2+c_p^\pm)^3}\,d\mu_2\\
&+\int_0^f\frac{(f-1)\mu_1+(f-e)(1-\mu_1)+(1-f)e}{(a_p^\pm \mu_1+
b_p^\pm (1-\mu_1)+c_p^\pm)^3}\,d\mu_1
\end{align*}
and
\begin{align*}
\int_{\Delta_p}\frac{\phi}{(f_p^\pm)^5}\,d\mu
=&\int_0^{1-f}\,d\mu_2\int_0^{\frac{f-e}{1-f}\mu_2+e}
\frac{(f-1)\mu_1+(f-e)\mu_2+(1-f)e}{(f_p^\pm)^5}\, d\mu_1\\
&+\int_{1-f}^1\,d\mu_2\int_0^{1-\mu_2}
\frac{(f-1)\mu_1+(f-e)\mu_2+(1-f)e}{(f_p^\pm)^5}\, d\mu_1
\end{align*}
The graph of $\mathrm{DF}_{\Delta_{0.1},f^-_{0.1},4}$ is as follows.

\includegraphics{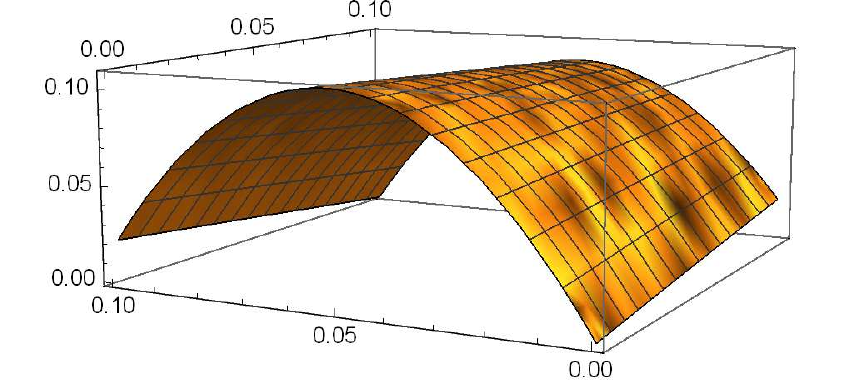}

\item[$3.\ {\bf u}=(0,e),\ {\bf v}=(f,1-f)\ (0\le e\le 1,0\le f\le p)$ ]
In this case, the corresponding sPL convex function is
$\phi=\max\{(f+e-1)\mu_1+f\mu_2-fe,0\}$.
Then
\begin{align*}
\int_{\partial \Delta_p}\frac{\phi }{(f_p^\pm)^3}\,d\sigma
=&\int_e^1\frac{f(\mu_2-e)}{(b_p^\pm \mu_2+c_p^\pm)^3}\,d\mu_2\\
&+
\int_0^f \frac{(f+e-1)\mu_1+f(1-\mu_1)-fe}{(a_p^\pm \mu_1+b_p^\pm (1-\mu_1)+
c_p^\pm)^3}\,d\mu_2
\end{align*}
and
$$
\int_{\Delta_p}\frac{\phi}{(f_p^\pm)^5}\,d\mu
=\int_0^f\,d\mu_1\int_{\frac{1-f-e}{f}\mu_1+e}^{1-\mu_1}
\frac{(f+e-1)\mu_1+f\mu_2-fe}{(f_p^\pm)^5}\, d\mu_2
$$
The graph of $\mathrm{DF}_{\Delta_{0.1},f^-_{0.1},4}$ is as follows.

\includegraphics{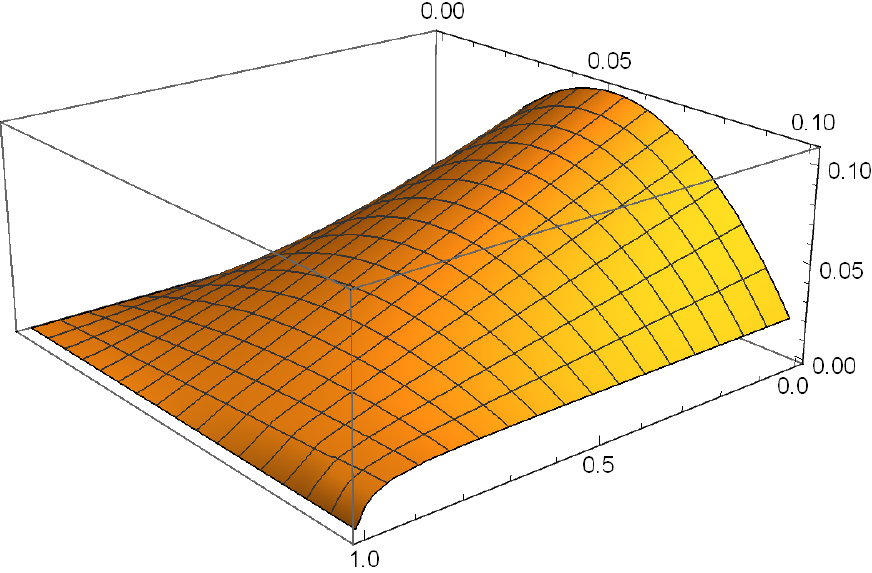}

\item[$4.\ {\bf u}=(0,e),\ {\bf v}=(f,0)\ (0\le e\le 1,0\le f\le p)$ ]
In this case, the corresponding sPL convex function is
$\phi=\max\{-e\mu_1-f\mu_2+fe,0\}$.
Then
$$
\int_{\partial \Delta_p}\frac{\phi }{(f_p^\pm)^3}\,d\sigma
=\int_0^f\frac{e(f-\mu_1)}{(a_p^\pm \mu_1+c_p^\pm)^3}\,d\mu_1+
\int_0^e \frac{f(e-\mu_2)}{(b_p^\pm \mu_2+c_p^\pm)^3}\,d\mu_2
$$
and
$$
\int_{\Delta_p}\frac{\phi}{(f_p^\pm)^5}\,d\mu
=\int_0^f\,d\mu_1\int_0^{-\frac{e}{f}\mu_1+e}
\frac{-e\mu_1-f\mu_2+fe}{(f_p^\pm)^5}\, d\mu_2
$$
The graph of $\mathrm{DF}_{\Delta_{0.1},f^-_{0.1},4}$ is as follows.

\includegraphics{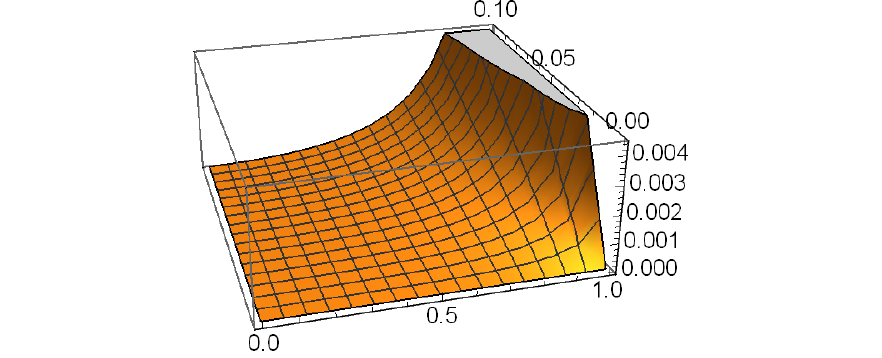}

\item[$5.\ {\bf u}=(p,e),\ {\bf v}=(f,0)\ (0\le e\le 1-p,0\le f\le p)$ ]
In this case, the corresponding sPL convex function is
$\phi=\max\{e\mu_1+(f-p)\mu_2-fe,0\}$.
Then
$$
\int_{\partial \Delta_p}\frac{\phi }{(f_p^\pm)^3}\,d\sigma
=\int_f^p\frac{e(\mu_1-f)}{(a_p^\pm \mu_1+c_p^\pm)^3}\,d\mu_1+
\int_0^e \frac{(p-f)(e-\mu_2)}{(a_p^\pm p+b_p^\pm \mu_2+c_p^\pm)^3}\,d\mu_2
$$
and
$$
\int_{\Delta_p}\frac{\phi}{(f_p^\pm)^5}\,d\mu
=\int_f^p\,d\mu_1\int_0^{\frac{e}{p-f}(\mu_1-p)+e}
\frac{e\mu_1+(f-p)\mu_2-fe}{(f_p^\pm)^5}\, d\mu_2
$$
The graph of $\mathrm{DF}_{\Delta_{0.1},f^-_{0.1},4}$ is as follows.

\includegraphics{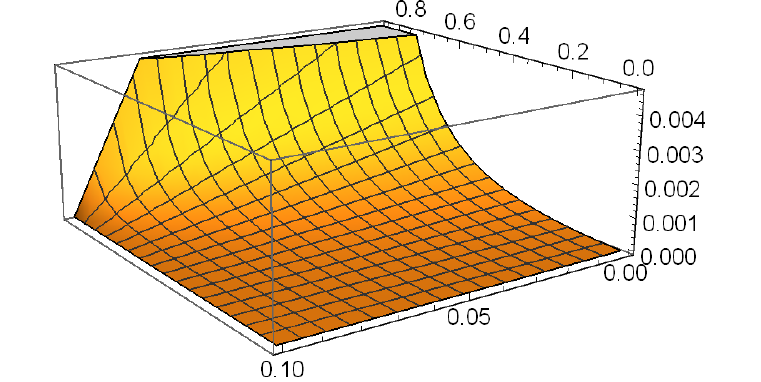}

\item[$6.\ {\bf u}=(p,e),\ {\bf v}=(f,1-f)\ (0\le e\le 1-p,0\le f\le p)$ ]
In this case, the corresponding sPL convex function is
$\phi=\max\{(1-e-f)(\mu_1-p)+(p-f)\mu_2+(f-p)e,0\}$.
Then
\begin{align*}
\int_{\partial \Delta_p}\frac{\phi }{(f_p^\pm)^3}\,d\sigma
=&\int_f^p\frac{(1-e-f)(\mu_1-p)+(p-f)(1-\mu_1)+(f-p)e}{(a_p^\pm \mu_1+
b_p^\pm (1-\mu_1)+c_p^\pm)^3}\,d\mu_1\\
&+\int_e^{1-p}\frac{(p-f)(\mu_2-e)}{(a_p^\pm p+b_p^\pm \mu_2+c_p^\pm)^3}\,d\mu_2
\end{align*}
and
$$
\int_{\Delta_p}\frac{\phi}{(f_p^\pm)^5}\,d\mu
=\int_f^p\,d\mu_1\int_{\frac{e+f-1}{p-f}(\mu_1-p)+e}^{1-\mu_1}
\frac{(1-e-f)(\mu_1-p)+(p-f)\mu_2+(f-p)e}{(f_p^\pm)^5}\, d\mu_2
$$
The graph of $\mathrm{DF}_{\Delta_{0.1},f^-_{0.1},4}$ is as follows.

\includegraphics{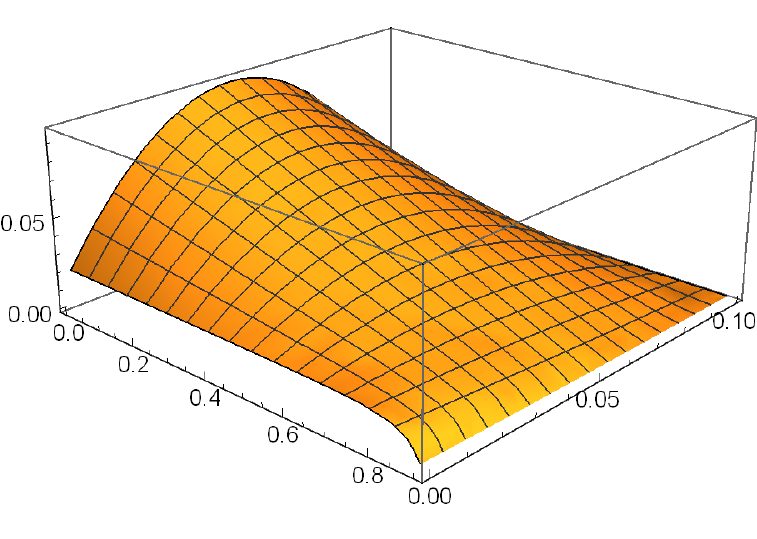}
\end{description}

Looking at the graphs, 
$(\Delta_p,f^\pm_p,4)$ must be K-polystable.
By Theorem $5$ in \cite{AM},
cKEM metrics with Killing potential $f_p^\pm$
ought to exist.
We leave this problem to the interested readers.




\end{document}